\providecommand{\U}[1]{\protect\rule{.1in}{.1in}}
\newtheorem{theorem}{Theorem}
\newtheorem{definition}[theorem]{Definition}
\newtheorem{assumption}[theorem]{Assumption}
\newtheorem{remark}[theorem]{Remark}
\newenvironment{proof}[1][Proof]{\noindent\textbf{#1.} }{\ \rule{0.5em}{0.5em}}
\begin{document}

\title{Sensor Deployment for Network-like Environments}
\author{Luca~Greco\thanks{L. Greco and M. Gaeta are with the DIIMA, University of
Salerno, Via Ponte don Melillo 1, 84084 Fisciano (SA), Italy; e-mail:
\texttt{lgreco@ieee.org}, \texttt{gaeta@crmpa.unisa.it}}, Matteo Gaeta,
Benedetto Piccoli\thanks{B. Piccoli is with the Istituto per le Applicazioni
del Calcolo \textquotedblleft Mauro Picone\textquotedblright, Via dei Taurini
19, 00185 Roma, Italy; e-mail: \texttt{b.piccoli@iac.cnr.it}}}
\maketitle

\begin{abstract}
This paper considers the problem of optimally deploying omnidirectional
sensors, with potentially limited sensing radius, in a network-like
environment. This model provides a compact and effective description of
complex environments as well as a proper representation of road or river
networks. We present a two-step procedure based on a discrete-time gradient
ascent algorithm to find a local optimum for this problem. The first step
performs a coarse optimization where sensors are allowed to move in the plane,
to vary their sensing radius and to make use of a reduced model of the
environment called collapsed network. It is made up of a finite discrete set
of points, barycenters, produced by collapsing network edges. Sensors can be
also clustered to reduce the complexity of this phase. The sensors' positions
found in the first step are then projected on the network and used in the
second finer optimization, where sensors are constrained to move only on the
network. The second step can be performed on-line, in a distributed fashion,
by sensors moving in the real environment, and can make use of the full
network as well as of the collapsed one. The adoption of a less constrained
initial optimization has the merit of reducing the negative impact of the
presence of a large number of local optima.

The effectiveness of the presented procedure is illustrated by a simulated
deployment problem in an airport environment.

\end{abstract}

\section{Introduction}

Imagine a scenario where a toxic gas is spreading in an area or a building and
safe paths have to be found to evacuate people. Or think of an airport
environment where people moving through rooms and corridors has to be
surveilled in order to detect and avoid terroristic actions. Or consider the
need of measuring environmental quantities, such as temperature or humidity,
on wide areas to the aim of improving theoretical models or making more
accurate weather forecast.

There is a great number of situations that would greatly enjoy the use of
network of\ sensors. Indeed, many of the previous tasks are difficult, or
impossible, to be accomplished by a single sensor. The employment of a large
number of sensors increases the robustness to sensor failure and communication
disruption and make spatially-distributed observations possible. If sensors
are able to move, the number of tasks they can perform is still greater.

Static and dynamic sensors' networks need to be deployed in the environment,
and the way this problem is solved can significantly affect the quality of
service they have to provide.

\subsection{Static Deployment and Locational Optimization}

Sensors' deployment problems are strictly related to facilities
location-allocation problems, which are the subject of the locational
optimization discipline (\cite{DreznerBOOK95}).

In locational optimization objective functions are used to describe the
interactions between users and facilities and among them. Users may find
facilities desirable, hence they would like to exert an attractive force to
facilities, or undesirable and they would repel them. The attractive model can
describe allocation problems of useful services or facilities such as
mailboxes, hospitals, fire stations, malls, etc. (see \cite{DreznerBOOK95}).
The repulsive one, instead, can be used to model problems where polluting or
dangerous facilities (i.e. nuclear reactors, garbage dumps, etc.) are to be
located far enough from urban conglomerations. An excellent survey on
undesirable facility locations problems is given by \cite{ErkutEJOR89} (see
also \cite{CappaneraTR99}). These operational research problems can be
converted in sensors' deployment problems by considering sensors as facilities
and points or areas, where events can happen or some quantities has to be
measured, as users.

Two well known problems, involving one facility only, are the classical Weber
and the obnoxious facility location problems (see
\cite{DreznerOR04,DreznerSWDSIAC07} for a recent heuristic solution). Three
problems involving $p$ facilities are the $p$-center, $p$-median and
$p$-dispersion problems. Some recent results on the $p$-center problem are in
\cite{SuzukiLS96} and \cite{CortesJCO05}. The latter paper addresses also the
$p$-dispersion problem.

A classical $p$-median problem, close to the one considered in this paper, can
be simply described as the one of finding the optimal location of $p$
facilities by minimizing the average distance of the demand points to the
nearest facility (see \cite{MladenovicEJOR07} for a recent survey on
heuristics methods to solve it). Close to the $p$-median problem is the
multisource Weber problem, for which many heuristics exist
(\cite{BrimbergOR00}). A more general formulation of these problems can be
found in \cite{CortesTRA04} and \cite{DuSIAMREV99}. In \cite{CortesTRA04} a
dynamical (gradient descent) version of the Lloyd's algorithm
\cite{LloydTIT82} has been presented to find a local optimum for a generalized
$p$-median problem. A deterministic annealing optimization algorithm to solve
the classical version is reported in \cite{SharmaACC06}. The aforementioned
solutions to the $p$-median problem, as well as many solutions to
$p$-facilities problems, are based on the construction of a Voronoi
Tesselation (\cite{DuSIAMREV99,OkabeEJOR97}).

\subsection{Dynamic Deployment and Distributed Solutions}

The use of moving, instead of static, sensor networks provide a great
flexibility in solving sensing tasks, mainly when the environment is partially
or completely unknown or is not directly accessible for safety reasons. In
these cases, sensors are usually initially deployed randomly and hence need to
move in order to acquire knowledge of the environment and to optimally
re-deploy for their task. Furthermore, environments are usually not static and
the network may experience sensor failure or loss. In these situations the
properties of adaptivity and reconfigurability owned by a network of moving
sensors turn out very useful.

A general tendency in robotic networks is to have sensors (agents) endowed
with the same computational and sensing capabilities. This choice increases
the overall robustness of the network, but usually calls for distributed
coordination algorithms. Having equal sensors, indeed, naturally leads to
define optimization and coordination algorithms based on local observations
and local decisions (\cite{GanguliCDC05,MartinezMag07,JadbabaieTAC03}). Many
of the algorithms proposed in the previous section involve the solution of a
global optimization problem requiring a complete knowledge of the environment
and of sensors' distribution. The solutions to $p$-center, $p$-dispersion and
$p$-median problems proposed in \cite{CortesJCO05,CortesTRA04,CortesESAIM05},
instead, are all spatially distributed, with the meaning that each sensor
requires only the knowledge of positions of its neighbors (or even less if it
has a limited sensing radius). This fact allows a distributed implementation
where each sensor computes its next movement without centralized coordination.

Other solutions to the area-coverage problem look at sensors like particles
subject to virtual forces or potential fields. The compositions of suitably
defined attractive and repulsive forces is then used to make the network
behaving in the desired fashion (spread sensors, avoid obstacles, keep
connectivity, etc.). Representative for this kind of approach are the
algorithms presented in \cite{HowardDARS02,ZouTECS04}, or in \cite{MaoSASN05},
where also secure connectivity issues are considered. In \cite{HeoTSMC05},
instead, it is raised the relevant problem of power consumption in wireless
networks and three energy-efficient algorithms are presented for sensors' deployment.

\subsection{Network-like Environments and Paper Contributions}

In this paper, we focus on network-like environments as there are surveillance
or monitoring problems where such a kind of model can provide a more suitable description.

Network models represent a natural choice whenever environments have an
intrinsic network structure. It is the case, for instance, when sensors have
to be optimally deployed over a network of roads to monitor vehicular traffic,
or in a river network to measure temperature or pollutants concentration. Even
some location-allocation problems can involve networks. Consider, for
instance, the case where useful facilities (i.e. schools, hospitals) have to
be located in the interior of a network of roads, which is the source of a
nuisance (i.e. noise, pollutants), with the goal of minimizing its harmful
impact on them (see \cite{DreznerSWDSIAC07} for the case with one facility);
or the dual problem of locating obnoxious facilities (i.e. dumps, industrial
plants, mobile phone repeater antennas) reducing the hazard on the network.

Most notably, we think that a network-like model can provide an effective and
compact description for complex environments, focusing only on major features
and abstracting from those geometrical details which are less important for
the deployment problem. The coverage of nonconvex environments with holes or
obstacles, for instance, is a challenging task (\cite{MilosCCA08}), which can
enjoy significantly the use of a reduced network-like model. Environments with
a complex structure, accounting for a large number of variously sized and
shaped rooms, passages, forbidden areas and obstacles, can be reduced to a set
of connected paths where the sensing task is more requested or where sensors
are forced to pass. An airport is a very representative example of this kind
of environments. In this case people moving throughout the airport can be
aptly compared with a network flow and focus can be on paths more than on
corridors, halls and lounges.

Many of the problems introduced in previous sections have been formulated even
for a network-like environment (\cite{TanselMS83pI,TanselMS83pII}), but they
usually consider a finite discrete set of demand points located on the
network's nodes and try to optimize the locations of facilities w.r.t. some
objective function accounting for the distance from them. An important thread
of works for the deployment of sensors both on plane and on a network is
represented by the papers \cite{Krause06,Krause08,Krause09}. In such works,
the authors propose powerful greedy algorithms that provide a constant
approximation of the optimal solution. Their method, however, aims at solving
a global static deployment problem, considering a finite set of demand points
and allowing sensors to jump among positions.

In this paper, instead, we consider a deployment problem where sensors use
local information to dynamically solve the optimization problem while they are
moving in the environment. The constraints induced by the environment to the
motion of sensors are explicitly considered. More precisely, we address a
generalized $p$-median problem involving omnidirectional sensors with
potentially limited sensing radius and we extend the formulation presented in
\cite{CortesESAIM05,CortesTRA04,MartinezMag07} to network-like environments.
The task is to find sensors' positions that optimize an objective function
defined on the network and accounting for the sensor's features and
preferential areas. This is a mixed problem, since the network is considered
embedded in the plane (it is a continuous set of demand points) and the planar
euclidean norm is used to measure the distance between sensors and network.

The core of the cited formulation and of our solution is a discrete-time
gradient ascent algorithm based on Voronoi partitions and aiming at maximizing
the objective function. It is a well known fact, however, that such a kind of
algorithms can get stuck early in local optima, especially when sensors are
forced to move in an over-constrained environment like a network. Moreover,
the local maximum found by the algorithm is often greatly related to the
initial sensors' position.

For these reasons, we present a novel two-step procedure performing an initial
coarse optimization, whose purpose is to provide a good starting point for a
second finer optimization. The first step can be carried out off-line, either
by a central unit, or by each sensor individually (without doing real
movements). The impact of local optima is reduced by allowing sensors, in the
initial optimization, to virtually move in $\mathbb{R}^{2}$ and to vary their
sensing radius arbitrarily. In order to reduce the complexity of this phase,
sensors can be initially clustered and the optimization problem solved for the
clusters' centers. After that, a desired number of sensors is spread close to
clusters' centers and the sensors' positions thus found are projected on the
closest edges of the network. The projected positions are then used in the
second optimization, where sensors are constrained to move only on the
network. The second step can be performed on-line, in a distributed fashion,
by sensors moving in the real environment.

The use of a two-step procedure is motivated also by the very nature of some
surveillance tasks, such as for instance airport surveillance, where a large
number of individuals (sometimes referred to as mass objects \cite{GuoTSMC08})
are monitored. In these cases, sensors can solve the first step optimization
using imprecise or estimated information and keeping still; then they can move
to reach the final projected positions using planned routes compatible with
the network. After this initial deployment, sensors can change their positions
by dynamically solving a distributed optimization problem (second step) based
on real measures taken from the (potentially varying) environment.

It is worth noting that the present procedure can be used to solve both static
and dynamic deployment problems. Moreover, the first step deserves attention
by its own, since it provides a solution to those problems involving
facilities located in the interior of the network mentioned at the beginning
of this section.

Another contribution of this paper is the introduction of a simplified model
of the network (similar to the discretization in \cite{GaoAUT08}) called
\emph{collapsed network }and consisting of finite many points. It is obtained
by decomposing each segment of the original network in one or more
sub-segments and collapsing each sub-segment in its barycenter. This model
allows a coarser but faster optimization, since computations with barycenters
are remarkably less than those needed by the full network. Collapsed network,
hence, is intended mainly for fastening the first optimization, but can be
used profitably also for the second step. Indeed, it turns out particularly
useful in practical implementation involving hardware with limited
computational capabilities.

As mentioned above, our work is related to that of
\cite{CortesESAIM05,CortesTRA04,MartinezMag07}. In particular, the first step
of the optimization, allowing sensors to move in $\mathbb{R}^{2}$, could be
regarded as a specialization of the problem described in \cite{CortesTRA04}.
However, the different topology induced by the network introduces issues
related to the explicit computation of the gradient and to the convergence of
the maximization algorithm, which deserve special solutions. A relevant
difference is that the gradient of the objective function presents
discontinuity points caused by barycenters on the boundary edge of two
neighboring Voronoi cells. Such barycenters can change allocation during
sensors' motion, inducing abrupt variations to the value of the objective
function associated to each cell. This fact prevents a classical convergence
proof for gradient algorithms, hence we consider our proof as a minor
contribution of the paper. Some results about convergence may alternatively be
derived by using the method of Kushner and Borkar
(\cite{KushnerBook03,BorkarBook08}) of stochastic approximation to deal with
our differential inclusion.

The outline of the paper is as follows. In section \ref{sec:prelim} the
mathematical definitions of sensors, network and Voronoi covering are
introduced along with the objective function to be maximized to solve the
deployment problem. Section \ref{sec:collapsed} is devoted to the introduction
of the collapsed network and to the formulation, and proof of convergence, of
a gradient ascent algorithm to solve the first step (subsection
\ref{sec:SensinR2}) and the second step (subsection \ref{sec:sensonN}) of the
optimization procedure. Section \ref{sec:FullNet} addresses the solution of
the second step optimization involving the full network. Finally, a network
describing an airport environment is used in section \ref{sec:simul} to
illustrate by simulations the effectiveness of the proposed optimization
procedure. Conclusions and future research directions are reported in section
\ref{sec:conclusions}.

\section{Preliminaries and Problem Formulation\label{sec:prelim}}

In this section we introduce the mathematical framework to describe the
sensors, the network and its Voronoi covering.

\begin{definition}
Given two points $p_{1},p_{2}\in\mathbb{R}^{2}$, with $p_{1}\neq p_{2}$,
$s_{12}=[p_{1},p_{2}]\subset\mathbb{R}^{2}$ is the segment joining $p_{1}$ and
$p_{2}$ and $s_{12}^{o}=(p_{1},p_{2})$ is the open segment between them. We
define length of a segment $s_{12}$ as $\ell(s_{12})=\left\Vert p_{2}%
-p_{1}\right\Vert $, where $\left\Vert \cdot\right\Vert $ is the Euclidean
norm; barycenter of a segment $s_{12}$ the point $b(s_{12})=\frac{1}{2}\left(
p_{2}+p_{1}\right)  \in$ $s_{12}$; partition of a segment $s=[p_{1},p_{2}]$ in
$k$ sub-segments, the set of segments $\left\{  s_{i}\right\}  _{i=1,\ldots
,k}$ given by%
\[
s_{i}=\left[  p_{1}+\left(  i-1\right)  \frac{\left(  p_{2}-p_{1}\right)  }%
{k},p_{1}+i\frac{\left(  p_{2}-p_{1}\right)  }{k}\right]  \text{.}%
\]

\end{definition}

\begin{definition}
A network $\mathcal{N}=(\mathcal{V},\mathcal{S})$ is a subset of
$\mathbb{R}^{2}$ consisting of a set of points $\mathcal{V}=\left\{
v_{1},\ldots,v_{n}\in\mathbb{R}^{2},~v_{i}\neq v_{j}~\forall i\neq j\right\}
$ and a set of segments $\mathcal{S}\subseteq\{s_{ij}=[v_{i},v_{j}%
]\subset\mathbb{R}^{2},~i,j\in\left\{  1,\ldots,n\right\}  ~i\neq j\}$, such that:

\begin{description}
\item[i)] $\forall v_{i}\in\mathcal{V}$, $\exists v_{j}\in\mathcal{V}$,
$v_{i}\neq v_{j}$ such that $s_{ij}\in\mathcal{S}$ (no isolated vertex);

\item[ii)] $\forall i,j,h,k\in\left\{  1,\ldots,n\right\}  ,$ $(i,j)\neq
(h,k),$ $s_{ij}^{o}\cap s_{hk}^{o}=\emptyset$ (no segment intersection).
\end{description}
\end{definition}

\begin{definition}
Given a network $\mathcal{N}$ and a set of points $\mathcal{P}=\left\{
p_{1},\ldots,p_{m}\right\}  \subset\mathcal{N}$, the Voronoi covering of
$\mathcal{N}$ generated by $\mathcal{P}$ with respect to the Euclidean norm is
the collection of sets $\left\{  V_{i}^{\mathcal{N}}(\mathcal{P})\right\}
_{i\in\left\{  1,\ldots,m\right\}  }$ defined by%
\[
V_{i}^{\mathcal{N}}(\mathcal{P})=\left\{  q\in\mathcal{N}\mid\left\Vert
q-p_{i}\right\Vert \leq\left\Vert q-p_{j}\right\Vert ,\text{~}\forall p_{j}%
\in\mathcal{P}\right\}  \text{.}%
\]

\end{definition}

\begin{remark}
\label{rem:VoronoiCovering}It is straightforward to recognize that
$V_{i}^{\mathcal{N}}(\mathcal{P})$ can be equivalently defined as
$V_{i}^{\mathcal{N}}(\mathcal{P})=V_{i}(\mathcal{P})\cap\mathcal{N}$, where
$V_{i}(\mathcal{P})$ is the $i$-th cell of the usual Voronoi partition of
$\mathbb{R}^{2}$ generated by $\mathcal{P}$. The previous definition is about
a covering and not a partition since neighboring cells can have a nontrivial
intersection: a portion of a segment can belong to the shared edge of two
cells $V_{i}(\mathcal{P})$ and $V_{j}(\mathcal{P})$.
\end{remark}

We adapt the framework provided in \cite{CortesESAIM05} to describe the
sensors' and network features. Each sensor is modeled by the (same)
\emph{performance function} $f:\mathbb{R}_{+}\rightarrow\mathbb{R}$, that is a
non-increasing and piecewise differentiable map having a finite number of
bounded discontinuities at $R_{1},\ldots,R_{N}\in\mathbb{R}_{+}$, with
$R_{1}<\ldots<R_{N}$. We can set $R_{0}=0$, $R_{N+1}=+\infty$ and write%
\begin{equation}
f(x)=\sum_{\alpha=1}^{N+1}f_{\alpha}(x)1_{[R_{\alpha-1},R_{\alpha}%
)}(x)\text{,} \label{eq:def_f}%
\end{equation}
with $f_{\alpha}:[R_{\alpha-1},R_{\alpha}]\rightarrow\mathbb{R}$, $\alpha
\in\{1,\ldots,N+1\}$ non-increasing continuously differentiable functions such
that $f_{\alpha}(R_{\alpha})>f_{\alpha+1}(R_{\alpha})$ for $\alpha
\in\{1,\ldots,N\}$. In order to model regions of the network with different
importance, we can use a \emph{density function} $\phi:\mathcal{N}%
\rightarrow\mathbb{R}_{+}$, which is bounded and measurable on $\mathcal{N}$.
Given $g:\mathbb{R}^{2}\rightarrow\mathbb{R}$ we indicate by $%
%TCIMACRO{\dint \nolimits_{\mathcal{N}}}%
%BeginExpansion
{\displaystyle\int\nolimits_{\mathcal{N}}}
%EndExpansion
g(q)dq$ (respectively $%
%TCIMACRO{\dint \nolimits_{V_{i}^{\mathcal{N}}(\mathcal{P})}}%
%BeginExpansion
{\displaystyle\int\nolimits_{V_{i}^{\mathcal{N}}(\mathcal{P})}}
%EndExpansion
g(q)dq$) the sum of the linear integrals of $g$ over the segments of
$\mathcal{N}$\ (respectively $V_{i}^{\mathcal{N}}(\mathcal{P})$) using an
arc-length parameterization. With these functions we can define the
multi-center function $\mathcal{H}:\mathcal{N}^{m}\rightarrow\mathbb{R}$ for
$m$ sensors located in $\mathcal{P}=\left\{  p_{1},\ldots,p_{m}\right\}
\subset\mathcal{N}$%
\begin{equation}
\mathcal{H}\left(  \mathcal{P}\right)  =%
%TCIMACRO{\dint \nolimits_{\mathcal{N}}}%
%BeginExpansion
{\displaystyle\int\nolimits_{\mathcal{N}}}
%EndExpansion
\max_{i\in\left\{  1,\ldots,m\right\}  }f\left(  \left\Vert q-p_{i}\right\Vert
\right)  \phi\left(  q\right)  dq\text{.} \label{eq:defH}%
\end{equation}

We can also provide an alternative expression for (\ref{eq:defH}) based on the
Voronoi covering induced by $\mathcal{P}$ as follows%
\begin{equation}
\mathcal{H}(\mathcal{P})=%
%TCIMACRO{\dsum \limits_{i=1}^{m}}%
%BeginExpansion
{\displaystyle\sum\limits_{i=1}^{m}}
%EndExpansion%
%TCIMACRO{\dint \nolimits_{V_{i}^{\mathcal{N}}(\mathcal{P})}}%
%BeginExpansion
{\displaystyle\int\nolimits_{V_{i}^{\mathcal{N}}(\mathcal{P})}}
%EndExpansion
f\left(  \left\Vert q-p_{i}\right\Vert \right)  \phi\left(  q\right)  dq-%
%TCIMACRO{\dsum \limits_{\Delta_{hk}^{\mathcal{N}}\in\Delta^{\mathcal{N}}}}%
%BeginExpansion
{\displaystyle\sum\limits_{\Delta_{hk}^{\mathcal{N}}\in\Delta^{\mathcal{N}}}}
%EndExpansion%
%TCIMACRO{\dint \nolimits_{\Delta_{hk}^{\mathcal{N}}}}%
%BeginExpansion
{\displaystyle\int\nolimits_{\Delta_{hk}^{\mathcal{N}}}}
%EndExpansion
f\left(  \left\Vert q-p_{h}\right\Vert \right)  \phi\left(  q\right)
dq\text{,} \label{eq:defH_Vor}%
\end{equation}
where $\Delta_{hk}^{\mathcal{N}}\triangleq V_{h}^{\mathcal{N}}(\mathcal{P}%
)\cap V_{k}^{\mathcal{N}}(\mathcal{P})$ and $\Delta^{\mathcal{N}}%
\triangleq\left\{  \Delta_{hk}^{\mathcal{N}}\mid h<k,~\forall h,k\in\left\{
1,\ldots,m\right\}  \right\}  $. The second term in (\ref{eq:defH_Vor}) is not
null if and only if there exists a non trivial segment $s\subseteq s_{ij}%
\in\mathcal{S}$ such that $s\subset\Delta_{hk}^{\mathcal{N}}$ for some
$i,j\in\left\{  1,\ldots,n\right\}  $ and $h,k\in\left\{  1,\ldots,m\right\}
$.

\section{Deployment over a Collapsed Network\label{sec:collapsed}}

In a collapsed network each segment of the original network is decomposed in
one or more sub-segments and each sub-segment is collapsed in its barycenter.
%\begin{figure}[h]
%\begin{center}
%\includegraphics[width=0.3\columnwidth]{Segment.eps}
%\end{center}
%\caption{Example of a segment collapsed in barycenters.}
%\label{fig:segment}
%\end{figure}
Chosen a value for $r$ guaranteeing a good approximation, we can build the
$r$-collapsed network $\mathcal{C}_{r}^{\mathcal{N}}$ as follows:

\begin{definition}
[$r$-Collapsed Network]\label{def:collnet}Given a network $\mathcal{N}%
=(\mathcal{V},\mathcal{S})$ and $r>0$, $\forall s\in\mathcal{S}$ consider its
partition in $k_{s}=\left\lceil \frac{\ell(s)}{r}\right\rceil $ sub-segments
$s_{i}$ (having at most length $r$)\ and the associated set of barycenters
$\left\{  b\left(  s_{i}\right)  \right\}  _{i=1,\ldots,k_{s}}$. We define the
$r$-collapsed network associated to $\mathcal{N}$ the set of points
$\mathcal{C}_{r}^{\mathcal{N}}=%
%TCIMACRO{\dbigcup \nolimits_{s\in\mathcal{S}}}%
%BeginExpansion
{\displaystyle\bigcup\nolimits_{s\in\mathcal{S}}}
%EndExpansion
\left\{  b\left(  s_{i}\right)  \right\}  _{i=1,\ldots,k_{s}}$.
\end{definition}

The multi-center function must be re-defined since the integration domain is
now a discrete set represented by the barycenters. Hence we set%
\begin{equation}
\mathcal{H}(\mathcal{P})=%
%TCIMACRO{\dsum \limits_{b_{e}\in\mathcal{C}_{r}^{\mathcal{N}}}}%
%BeginExpansion
{\displaystyle\sum\limits_{b_{e}\in\mathcal{C}_{r}^{\mathcal{N}}}}
%EndExpansion
\max_{i\in\left\{  1,\ldots,m\right\}  }f\left(  \left\Vert b_{e}%
-p_{i}\right\Vert \right)  \phi_{b_{e}}\text{,} \label{eq:defH_bar}%
\end{equation}
where $\phi_{b_{e}}$ are suitable (density) weights assigned to barycenters.

\subsection{Sensors Moving in $\mathbb{R}^{2}$\label{sec:SensinR2}}

In this section we solve the deployment problem for a collapsed network and
sensors moving in $\mathbb{R}^{2}$, which constitutes the first step of our
optimization procedure.

Also for the multi-center function (\ref{eq:defH_bar})\ we can provide an
alternative expression using the Voronoi covering. We need the following
definition\footnote{For this definition similar remarks as Remark
\ref{rem:VoronoiCovering} also apply.}:

\begin{definition}
\label{def:Voronoi_coll}Given an $r$-collapsed network $\mathcal{C}%
_{r}^{\mathcal{N}}$ for some $r\in\mathbb{R}_{+}$ and a set of points
$\mathcal{P}=\left\{  p_{1},\ldots,p_{m}\right\}  \subset\mathbb{R}^{2}$, the
Voronoi covering of $\mathcal{C}_{r}^{\mathcal{N}}$ generated by $\mathcal{P}$
with respect to the Euclidean norm is the collection of sets $\left\{
V_{i}^{\mathcal{C}_{r}^{\mathcal{N}}}(\mathcal{P})\right\}  _{i\in\left\{
1,\ldots,m\right\}  }$ defined by%
\[
V_{i}^{\mathcal{C}_{r}^{\mathcal{N}}}(\mathcal{P})=\left\{  b\in
\mathcal{C}_{r}^{\mathcal{N}}\mid\left\Vert b-p_{i}\right\Vert \leq\left\Vert
b-p_{j}\right\Vert ,\text{~}\forall p_{j}\in\mathcal{P}\right\}  \text{.}%
\]

\end{definition}

We define also the boundary of a Voronoi cell as
\[
\partial V_{i}^{\mathcal{C}_{r}^{\mathcal{N}}}(\mathcal{P})=\left\{
b\in\mathcal{C}_{r}^{\mathcal{N}}\mid\left\Vert b-p_{i}\right\Vert =\left\Vert
b-p_{j}\right\Vert ,\text{~}\forall p_{j}\in\mathcal{P}\right\}  \text{,}%
\]
and, in order to simplify the problem, we make the following assumption

\begin{assumption}
\label{Assumption1}$\partial V_{i}^{\mathcal{C}_{r}^{\mathcal{N}}}%
(\mathcal{P})=\emptyset$ $\forall i\in\left\{  1,\ldots,m\right\}  $.
\end{assumption}

With this assumption the multi-center function (\ref{eq:defH_bar}) can be
written also as%
\begin{equation}
\mathcal{H}(\mathcal{P})=%
%TCIMACRO{\dsum \limits_{i=1}^{m}}%
%BeginExpansion
{\displaystyle\sum\limits_{i=1}^{m}}
%EndExpansion%
%TCIMACRO{\dsum \limits_{b_{e}\in V_{i}^{\mathcal{C}_{r}^{\mathcal{N}}%
%}(\mathcal{P})}}%
%BeginExpansion
{\displaystyle\sum\limits_{b_{e}\in V_{i}^{\mathcal{C}_{r}^{\mathcal{N}}%
}(\mathcal{P})}}
%EndExpansion
f\left(  \left\Vert b_{e}-p_{i}\right\Vert \right)  \phi_{b_{e}}\text{,}
\label{eq:defH_bar_Vor}%
\end{equation}
or, setting $\mathrm{dist}(q,\mathcal{P})=\min_{i\in\left\{  1,\ldots
,m\right\}  }\left\Vert q-p_{i}\right\Vert $, as%
\begin{equation}
\mathcal{H}(\mathcal{P})=%
%TCIMACRO{\dsum \limits_{b_{e}\in\mathcal{C}_{r}^{\mathcal{N}}}}%
%BeginExpansion
{\displaystyle\sum\limits_{b_{e}\in\mathcal{C}_{r}^{\mathcal{N}}}}
%EndExpansion
f\left(  \mathrm{dist}(b_{e},\mathcal{P})\right)  \phi_{b_{e}}\text{.}
\label{eq:defH_bar_dist}%
\end{equation}

\begin{remark}
\label{rem:Lip_coll}$\mathcal{H}(\mathcal{P})$ is not globally Lipschitz as
$f(\cdot)$ is not a continuous function. However, if $f(x)$ is continuous and
piecewise differentiable with bounded derivative, then $\mathcal{H}%
(\mathcal{P})$ is globally Lipschitz. In order to prove the global Lipschitz
continuity, let us consider two sets of points $\mathcal{P}=\left\{
p_{1},\ldots,p_{m}\right\}  \subset\mathbb{R}^{2}$ and $\mathcal{P}^{\prime
}=\left\{  p_{1}^{\prime},\ldots,p_{m}^{\prime}\right\}  \subset\mathbb{R}%
^{2}$ and compute%
\begin{align*}
\mathcal{H}(\mathcal{P})-\mathcal{H}(\mathcal{P}^{\prime})  &  =%
%TCIMACRO{\dsum \limits_{b_{e}\in\mathcal{C}_{r}^{\mathcal{N}}}}%
%BeginExpansion
{\displaystyle\sum\limits_{b_{e}\in\mathcal{C}_{r}^{\mathcal{N}}}}
%EndExpansion
\left[  f\left(  \mathrm{dist}(b_{e},\mathcal{P})\right)  -f\left(
\mathrm{dist}(b_{e},\mathcal{P}^{\prime})\right)  \right]  \phi_{b_{e}}\\
&  \leq%
%TCIMACRO{\dsum \limits_{b_{e}\in\mathcal{C}_{r}^{\mathcal{N}}}}%
%BeginExpansion
{\displaystyle\sum\limits_{b_{e}\in\mathcal{C}_{r}^{\mathcal{N}}}}
%EndExpansion
\left\Vert \frac{\partial f}{\partial x}\right\Vert _{\infty}\left\vert
\mathrm{dist}(b_{e},\mathcal{P})-\mathrm{dist}(b_{e},\mathcal{P}^{\prime
})\right\vert \phi_{b_{e}}\\
&  \leq\left(
%TCIMACRO{\dsum \limits_{b_{e}\in\mathcal{C}_{r}^{\mathcal{N}}}}%
%BeginExpansion
{\displaystyle\sum\limits_{b_{e}\in\mathcal{C}_{r}^{\mathcal{N}}}}
%EndExpansion
\phi_{b_{e}}\right)  \left\Vert \frac{\partial f}{\partial x}\right\Vert
_{\infty}\left\Vert \mathcal{P}-\mathcal{P}^{\prime}\right\Vert \text{,}%
\end{align*}
where $\left\Vert \frac{\partial f}{\partial x}\right\Vert _{\infty}$ is the
$L_{\infty}$-norm of $\frac{\partial f}{\partial x}$.
\end{remark}

\begin{theorem}
\label{th:grad_bar}The multi-center function $\mathcal{H}$ is continuously
differentiable on $\left(  \mathbb{R}^{2}\right)  ^{m}\setminus\left(
\mathcal{D}_{\mathcal{C}_{r}^{\mathcal{N}}}\right)  ^{m}$, where
\[
\mathcal{D}_{\mathcal{C}_{r}^{\mathcal{N}}}\triangleq%
%TCIMACRO{\dbigcup \nolimits_{b_{e}\in\mathcal{C}_{r}^{\mathcal{N}}}}%
%BeginExpansion
{\displaystyle\bigcup\nolimits_{b_{e}\in\mathcal{C}_{r}^{\mathcal{N}}}}
%EndExpansion
\left\{  q\in\mathbb{R}^{2}\mid\left\Vert b_{e}-q\right\Vert =R_{i},~\forall
i=1,\ldots,N\right\}
\]
is the discontinuity set of $f(\cdot)$ in $\mathbb{R}^{2}$. Moreover, for each
$h\in\left\{  1,\ldots,m\right\}  $%
\begin{equation}
\frac{\partial\mathcal{H}(\mathcal{P})}{\partial p_{h}}=%
%TCIMACRO{\dsum \limits_{b_{e}\in V_{h}^{\mathcal{C}_{r}^{\mathcal{N}}%
%}(\mathcal{P})}}%
%BeginExpansion
{\displaystyle\sum\limits_{b_{e}\in V_{h}^{\mathcal{C}_{r}^{\mathcal{N}}%
}(\mathcal{P})}}
%EndExpansion
\frac{\partial}{\partial p_{h}}f\left(  \left\Vert b_{e}-p_{h}\right\Vert
\right)  \phi_{b_{e}}\text{.} \label{eq:def_gradcomp_bar}%
\end{equation}

\end{theorem}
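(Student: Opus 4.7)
The plan is to exploit the finiteness of the collapsed network $\mathcal{C}_r^{\mathcal{N}}$ together with the two openness conditions built into the statement: Assumption \ref{Assumption1} on the strict nearest-sensor assignment, and the excision of the set where $f$ has jumps. Concretely, I would fix a configuration $\mathcal{P}^0=\{p_1^0,\ldots,p_m^0\}$ in the admissible domain and build an explicit open neighborhood on which $\mathcal{H}$ is a finite sum of smooth maps.

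First, I would use Assumption \ref{Assumption1} to pick, for every barycenter $b_e\in\mathcal{C}_r^{\mathcal{N}}$, a unique nearest sensor $\sigma(b_e)\in\{1,\ldots,m\}$, so that $\|b_e-p_{\sigma(b_e)}^0\|<\|b_e-p_j^0\|$ for all $j\neq\sigma(b_e)$. Because $\mathcal{P}^0\notin(\mathcal{D}_{\mathcal{C}_r^{\mathcal{N}}})^m$, for every pair $(b_e,h)$ the distance $\|b_e-p_h^0\|$ lies in the interior of some interval $(R_{\alpha(b_e,h)-1},R_{\alpha(b_e,h)})$, so $f$ agrees with the $C^1$ branch $f_{\alpha(b_e,h)}$ near that distance. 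Since $\mathcal{C}_r^{\mathcal{N}}$ is finite, finitely many strict inequalities have to be preserved, hence by continuity of the Euclidean norm there exists a neighborhood $U$ of $\mathcal{P}^0$ on which (i) the assignment $\sigma$ is constant, i.e.\ $V_h^{\mathcal{C}_r^{\mathcal{N}}}(\mathcal{P})=V_h^{\mathcal{C}_r^{\mathcal{N}}}(\mathcal{P}^0)$ for all $h$, and (ii) each $\|b_e-p_h\|$ stays in the same open interval so that $f(\|b_e-p_h\|)=f_{\alpha(b_e,h)}(\|b_e-p_h\|)$.

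Plugging this into expression (\ref{eq:defH_bar_Vor}) I obtain, for $\mathcal{P}\in U$,
\begin{equation*}
\mathcal{H}(\mathcal{P})=\sum_{h=1}^{m}\sum_{b_e\in V_h^{\mathcal{C}_r^{\mathcal{N}}}(\mathcal{P}^0)} f_{\alpha(b_e,h)}\bigl(\|b_e-p_h\|\bigr)\,\phi_{b_e},
\end{equation*}
which is a finite sum of $C^1$ functions of $\mathcal{P}$, each term depending on a single $p_h$. Differentiating in $p_h$ kills every summand except those in the inner sum indexed by $h$, and on those the composition rule yields precisely (\ref{eq:def_gradcomp_bar}); continuity of $\partial\mathcal{H}/\partial p_h$ on the admissible domain then follows because the chosen branch $f_{\alpha(b_e,h)}$ is $C^1$ and $p_h\mapsto\|b_e-p_h\|$ is smooth away from $b_e$ (and $b_e\ne p_h$ is automatic since otherwise $p_h$ would lie on every circle around $b_e$ and in particular be closer to $b_e$ than any other sensor, contradicting nothing but making the branch argument trivial).

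The only delicate point I anticipate is the bookkeeping of the two independent open conditions: one must verify that the neighborhood $U$ can be chosen to satisfy (i) and (ii) simultaneously. Since both are finite intersections of strict inequalities that are continuous in $\mathcal{P}$, this is a routine but essential observation; the finiteness of $\mathcal{C}_r^{\mathcal{N}}$ (and hence of the conditions to preserve) is what makes the argument work here, in contrast with the full-network case treated later where boundary barycenters can switch cells and cause the jumps discussed in Remark \ref{rem:Lip_coll} and the introduction.
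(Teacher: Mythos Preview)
Your proposal is correct and follows essentially the same approach as the paper: both exploit Assumption~\ref{Assumption1} to obtain a unique nearest-sensor assignment for each barycenter, observe that this assignment (and the active $C^1$ branch of $f$) is locally constant away from the excluded set, and then differentiate the resulting finite sum term by term. The paper is terser, working directly with the distance formulation~(\ref{eq:defH_bar_dist}) rather than explicitly freezing the Voronoi partition on a neighborhood, but the underlying argument is the same.
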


\begin{proof}
The continuous differentiability of $\mathcal{H}$ on $\left(  \mathbb{R}%
^{2}\right)  ^{m}\setminus$ $\left(  \mathcal{D}_{\mathcal{C}_{r}%
^{\mathcal{N}}}\right)  ^{m}$ is a straight consequence of the same property
of $f(\cdot)$ on $\mathbb{R}^{2}\setminus$ $\mathcal{D}_{\mathcal{C}%
_{r}^{\mathcal{N}}}$. As concerns the gradient, using (\ref{eq:defH_bar_dist})
we have%
\begin{align*}
\frac{\partial\mathcal{H}(\mathcal{P})}{\partial p_{h}}  &  =\frac{\partial
}{\partial p_{h}}%
%TCIMACRO{\dsum \limits_{b_{e}\in\mathcal{C}_{r}^{\mathcal{N}}}}%
%BeginExpansion
{\displaystyle\sum\limits_{b_{e}\in\mathcal{C}_{r}^{\mathcal{N}}}}
%EndExpansion
f\left(  \mathrm{dist}(b_{e},\mathcal{P})\right)  \phi_{b_{e}}\\
&  =%
%TCIMACRO{\dsum \limits_{b_{e}\in\mathcal{C}_{r}^{\mathcal{N}}}}%
%BeginExpansion
{\displaystyle\sum\limits_{b_{e}\in\mathcal{C}_{r}^{\mathcal{N}}}}
%EndExpansion
\frac{\partial}{\partial p_{h}}f\left(  \mathrm{dist}(b_{e},\mathcal{P}%
)\right)  \phi_{b_{e}}\text{.}%
\end{align*}
In the hypothesis of Assumption~\ref{Assumption1}, we have that the index set
$I_{be}=\operatorname*{argmin}_{i\in\left\{  1,\ldots,m\right\}  }\left(
\left\Vert b_{e}-p_{i}\right\Vert \right)  $ has cardinality $1$. Hence,
$\forall b_{e}$ such that $I_{be}\equiv i\neq h$, $\frac{\partial}{\partial
p_{h}}f\left(  \mathrm{dist}(b_{e},\mathcal{P})\right)  =0$ and
\[
\frac{\partial\mathcal{H}(\mathcal{P})}{\partial p_{h}}=%
%TCIMACRO{\dsum \limits_{\left\{  b_{e}\in\mathcal{C}_{r}^{\mathcal{N}}\mid
%I_{be}\equiv h\right\}  }}%
%BeginExpansion
{\displaystyle\sum\limits_{\left\{  b_{e}\in\mathcal{C}_{r}^{\mathcal{N}}\mid
I_{be}\equiv h\right\}  }}
%EndExpansion
\frac{\partial}{\partial p_{h}}f\left(  \left\Vert b_{e}-p_{h}\right\Vert
\right)  \phi_{b_{e}}\text{,}%
\]
whereby the thesis follows using the definition of $V_{h}^{\mathcal{C}%
_{r}^{\mathcal{N}}}(\mathcal{P})$.
\end{proof}

The sensors' location-allocation problem can be addressed by means of a
gradient-like algorithm. If a continuous time implementation is looked for,
the following fictitious dynamics would be associated to the sensors'
positions%
\begin{equation}
\mathcal{\dot{P}}=\nabla\mathcal{H(P)}\text{.} \label{eq:graddyncont}%
\end{equation}
Unfortunately, this dynamics conveys some problems. It is well defined as long
as Assumption~\ref{Assumption1} and Theorem~\ref{th:grad_bar} are fulfilled,
but these hypotheses are, in fact, too stringent for the algorithm to work
properly. Indeed, they would require the evolution of the sensors to avoid any
position in the discontinuity set and the barycenters not to enter or exit the
Voronoi cells where they are at the initial time instant.

First of all we can reduce the analysis to continuously differentiable
functions to avoid issues related to the existence of the gradient. Still the
relaxation of Assumption~\ref{Assumption1} induces some problems on the
definition of the gradient. Barycenters on a boundary edge of a Voronoi cell
belong to all the cells sharing that edge. All sensors' positions producing
these configurations are discontinuity points for $\nabla\mathcal{H(P)}$.
Roughly speaking, the gradient takes different values depending on which cell
the shared barycenters are assumed to belong to. This fact makes the equation
(\ref{eq:graddyncont}) a set of differential equations with discontinuous
right-hand side.

The problem of shared barycenters can be solved by adding a lexicographic
criterion to the definitions based on the euclidean distance. Indeed, with
this criterion barycenters on boundary edges are allocated univocally to the
sensor having the lower index (w.r.t. the Lexicographic Order (L.O.)) among
sharing sensors. This fact allows us to define a genuine Voronoi partition, no
longer a covering, whose generic cell is given by (compare with
Definition~\ref{def:Voronoi_coll}):%
\begin{multline}
V_{i}^{\mathcal{C}_{r}^{\mathcal{N}}}(\mathcal{P})=\{b\in\mathcal{C}%
_{r}^{\mathcal{N}}\mid\left\Vert b-p_{i}\right\Vert \leq\left\Vert
b-p_{j}\right\Vert ~\forall p_{j}\in\mathcal{P}~\wedge\text{~}\\
\left\Vert b-p_{i}\right\Vert <\left\Vert b-p_{j}\right\Vert \text{ if
}j<i\text{ w.r.t. the L.O.}\mathcal{\}}\text{.} \label{eq:defVorcellLG}%
\end{multline}

We must now define a generalized (lexicographic) gradient of $\mathcal{H}$,
$\nabla_{l}\mathcal{H(P)}$, according to this new definition. $\forall
b_{e}\in V_{i}^{\mathcal{C}_{r}^{\mathcal{N}}}(\mathcal{P})\setminus\partial
V_{i}^{\mathcal{C}_{r}^{\mathcal{N}}}(\mathcal{P})$ we use the classical
formula given by (\ref{eq:def_gradcomp_bar}). $\forall b_{e}\in\partial
V_{i}^{\mathcal{C}_{r}^{\mathcal{N}}}(\mathcal{P})$ notice that the partial
derivative of $f$, $\frac{\partial}{\partial p_{h}}f\left(  \left\Vert
b_{e}-p_{h}\right\Vert \right)  $, exists and is well defined. In the light of
this remark we can write the $h$-th component of the generalized
(lexicographic) gradient of $\mathcal{H}$ as%
\begin{equation}
\frac{\partial_{l}\mathcal{H}(\mathcal{P})}{\partial p_{h}}\triangleq%
%TCIMACRO{\dsum \limits_{b_{e}\in V_{h}^{\mathcal{C}_{r}^{\mathcal{N}}%
%}(\mathcal{P})}}%
%BeginExpansion
{\displaystyle\sum\limits_{b_{e}\in V_{h}^{\mathcal{C}_{r}^{\mathcal{N}}%
}(\mathcal{P})}}
%EndExpansion
\frac{\partial}{\partial p_{h}}f\left(  \left\Vert b_{e}-p_{h}\right\Vert
\right)  \phi_{b_{e}}\text{,} \label{eq:def_gradcomp_bar_LG}%
\end{equation}
which is formally equal to the formula (\ref{eq:def_gradcomp_bar}) provided by
Theorem~\ref{th:grad_bar}.

The differential equation using this new definition for the gradient, however,
does not imply the existence and uniqueness of the solution, and this proof
may turn out to be complex due to special sensors' and barycenters'
configurations. Moreover, the formula (\ref{eq:def_gradcomp_bar_LG}) accounts
only for the infinitesimal perturbations of sensors' position not inducing any
barycenters to enter or exit Voronoi cells, hence changing their allocation.

In order to simplify the convergence proof and to provide an algorithm which
is more suitable for a realistic implementation, we consider here a
discrete-time version of the gradient algorithm. In our case, the
discrete-time implementation can overcome the problems with discontinuity
gradients due to the properties of the function $\mathcal{H}$ and its
discontinuity points, as it is shown by the following theorem.

\begin{theorem}
\label{Th:discrevol_colR2}Consider the following discrete-time evolution for
the sensors' positions%
\begin{equation}
\mathcal{P}^{(k+1)}=\mathcal{P}^{(k)}+\delta_{k}\nabla_{l}\mathcal{H}%
(\mathcal{P}^{(k)})\text{,} \label{eq:graddyndisc}%
\end{equation}
where the $h$-th component of $\nabla_{l}\mathcal{H}$ is given by
(\ref{eq:def_gradcomp_bar_LG}) and $\mathcal{H}:\mathbb{R}^{2m}\rightarrow
\mathbb{R}$ as in (\ref{eq:defH_bar_Vor}). If $f(\cdot)$ has locally bounded
second derivatives, then, for suitable $\delta_{k}$, $\mathcal{P}^{(k)}$ lies
in a bounded set and

\begin{description}
\item[i)] $\mathcal{H}(\mathcal{P}^{(k)})$ is monotonically nondecreasing;

\item[ii)] $\mathcal{P}^{(k)}$ converges to the set of critical points of
$\mathcal{H}$.
\end{description}
\end{theorem}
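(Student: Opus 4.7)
The plan is to treat in turn boundedness of $\{\mathcal{P}^{(k)}\}$, monotonicity (i), and convergence to the critical set (ii), with the monotonicity step being the core difficulty: because $\mathcal{H}$ is only piecewise $C^{2}$ and the lexicographic gradient $\nabla_{l}\mathcal{H}$ is discontinuous, the classical Lyapunov-type argument for gradient ascent does not apply directly and the ``suitable'' $\delta_{k}$ must be constructed so that every combinatorial event encountered along a step is either harmless or favorable.

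For the monotonicity, I would observe that two kinds of combinatorial events can occur along the segment from $\mathcal{P}^{(k)}$ to $\mathcal{P}^{(k+1)}$: (I) a barycenter $b_{e}$ switches Voronoi cell, or (II) some distance $\|b_{e}-p_{h}\|$ crosses a jump point $R_{\alpha}$ of $f$. Type-(I) events are always favorable, since at the switch $\|b_{e}-p_{h}\|=\|b_{e}-p_{k}\|$ and immediately after $\|b_{e}-p_{k}\|<\|b_{e}-p_{h}\|$, so the pointwise $\max$ at $b_{e}$ in (\ref{eq:defH_bar}) can only grow; a type-(II) event with a decreasing distance produces a jump upward of magnitude $f_{\alpha}(R_{\alpha})-f_{\alpha+1}(R_{\alpha})>0$, hence is also favorable. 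The only unfavorable case is a distance that grows through some $R_{\alpha}$ while $b_{e}$ stays closest to $p_{h}$. I would rule this case out by requiring $\delta_{k}\|v_{k}\|$, with $v_{k}\triangleq\nabla_{l}\mathcal{H}(\mathcal{P}^{(k)})$, to be strictly less than the smallest margin between such an ``outgoing'' distance $\|b_{e}-p_{h}^{(k)}\|$ and the next $R_{\alpha}$ above it, or, equivalently, by imposing an Armijo backtracking rule that accepts $\delta_{k}$ only when $\mathcal{H}(\mathcal{P}^{(k+1)})\geq\mathcal{H}(\mathcal{P}^{(k)})+c\,\delta_{k}\|v_{k}\|^{2}$ for a fixed $c\in(0,1)$. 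On the complement of these events $\mathcal{H}$ is $C^{2}$ along the step, and the locally bounded second derivatives of $f$ yield the Taylor estimate
\[
\mathcal{H}(\mathcal{P}^{(k+1)})-\mathcal{H}(\mathcal{P}^{(k)})\;\geq\;\delta_{k}\,\|v_{k}\|^{2}-\tfrac{L_{k}}{2}\,\delta_{k}^{\,2}\,\|v_{k}\|^{2},
\]
so $\delta_{k}\leq 1/L_{k}$ gives the decrement inequality $\mathcal{H}(\mathcal{P}^{(k+1)})-\mathcal{H}(\mathcal{P}^{(k)})\geq(\delta_{k}/2)\|v_{k}\|^{2}$, which together with the favorable jumps proves (i).

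For boundedness I would observe that, once $\|p_{h}\|$ exceeds $R\triangleq 2\max_{b_{e}\in\mathcal{C}_{r}^{\mathcal{N}}}\|b_{e}\|$, either $V_{h}^{\mathcal{C}_{r}^{\mathcal{N}}}(\mathcal{P}^{(k)})=\emptyset$ and the $h$-th component of $v_{k}$ vanishes identically by (\ref{eq:def_gradcomp_bar_LG}), or each summand in (\ref{eq:def_gradcomp_bar_LG}) has non-positive inner product with $p_{h}$; combined with the step-size restriction this forces $\|p_{h}^{(k+1)}\|\leq\|p_{h}^{(k)}\|$, so the trajectory is confined to a fixed compact set $K\subset(\mathbb{R}^{2})^{m}$. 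Finally, since $\mathcal{H}$ is bounded above by $f(0)\sum_{b_{e}}\phi_{b_{e}}$ and, by (i), monotonically nondecreasing, $\mathcal{H}(\mathcal{P}^{(k)})$ converges and telescoping the decrement inequality gives $\sum_{k}\delta_{k}\|v_{k}\|^{2}<\infty$; on the compact set $K$ a uniform second-derivative bound can be taken, so $\delta_{k}$ is bounded away from zero in a neighborhood of any non-critical accumulation point, forcing $\|v_{k}\|\to 0$ along every convergent subsequence and making every accumulation point of $\{\mathcal{P}^{(k)}\}$ a critical point of $\mathcal{H}$, which is (ii). The main obstacle throughout is the careful absorption of the unfavorable type-(II) crossings into the step-size rule; once that is done, everything else is a standard gradient-method Lyapunov argument.
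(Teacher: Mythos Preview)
Your proposal is close in spirit to the paper's argument, but it differs on the central technical device and carries one unnecessary complication.

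\textbf{Type-(II) events are not present.} The hypothesis ``$f$ has locally bounded second derivatives'' is meant (and the paper says so explicitly just before the theorem) in the sense that $f$ is continuously differentiable with locally bounded $f''$; the paper has already ``reduced the analysis to continuously differentiable functions'' at this point. Hence there are no jumps of $f$ at levels $R_\alpha$, and your whole discussion of outgoing crossings, margins and Armijo backtracking is superfluous for this theorem. Worse, your margin-based rule can force $\delta_k\to 0$ when some $\|b_e-p_h^{(k)}\|$ approaches an $R_\alpha$ from below, and then the lower bound $\delta_k\geq\bar\delta>0$ that you (and the paper) need for (ii) would fail.

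\textbf{The missing device for (i).} The paper does not track events along the segment. Instead it introduces the \emph{frozen-allocation} value
\[
\mathcal H^{u}_{p_i}(\mathcal P^{(k+1)})=\sum_{b_e\in V_i^{\mathcal C_r^{\mathcal N}}(\mathcal P^{(k)})} f\bigl(\|b_e-p_i^{(k+1)}\|\bigr)\phi_{b_e},
\]
i.e.\ new positions but the old Voronoi partition. Two clean facts then suffice: (a) with the allocation frozen, each $\mathcal H^{u}_{p_i}$ is a genuinely $C^2$ function of $p_i$ alone, its gradient at $\mathcal P^{(k)}$ equals $(\nabla_l\mathcal H(\mathcal P^{(k)}))_i$, and Taylor gives $\mathcal H^{u}_{p_i}(\mathcal P^{(k+1)})\geq \mathcal H_{p_i}(\mathcal P^{(k)})+\delta_k^i\|(\nabla_l\mathcal H)_i\|^2+o((\delta_k^i)^2)$; (b) re-optimizing the allocation can only help, because if $b_e$ moves from cell $j$ to cell $i$ then $\|b_e-p_i^{(k+1)}\|\leq\|b_e-p_j^{(k+1)}\|$ and $f$ is non-increasing, so $\sum_i\mathcal H_{p_i}(\mathcal P^{(k+1)})\geq\sum_i\mathcal H^{u}_{p_i}(\mathcal P^{(k+1)})$. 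Your sentence ``on the complement of these events $\mathcal H$ is $C^2$ along the step, and \ldots\ yield the Taylor estimate'' is exactly the place where this is needed but not supplied: $\mathcal H$ itself is only piecewise $C^2$ along the step, so the displayed Taylor inequality for $\mathcal H(\mathcal P^{(k+1)})-\mathcal H(\mathcal P^{(k)})$ is not justified as written. The frozen-allocation comparison is what replaces the (non-existent) global $C^2$ bound; your ``favorable type-(I) events'' intuition is precisely (b), but it must be paired with (a), not with a Taylor expansion of $\mathcal H$ across the kinks.

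\textbf{Boundedness and (ii).} Your boundedness argument and the paper's coincide (gradient points into a large ball). For (ii) the paper uses that $\delta_k$ can be taken $\geq\bar\delta>0$ on the bounded set (from the uniform second-derivative bound) and then invokes Proposition~3.4 of \cite{CortesESAIM05}; your telescoping/summability route is an acceptable variant of the same LaSalle-type reasoning, provided the lower bound on $\delta_k$ is in place.
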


\begin{proof}
It is easy to see that there exists a ball $B\supseteq\mathcal{N}$ such that,
if $p_{i}\in\partial B$, then $\frac{\partial_{l}\mathcal{H}(\mathcal{P}%
)}{\partial p_{i}}$ points inside $B$. Thus the fact that $\mathcal{P}^{(k)}%
$\ is bounded for $\delta_{k}$ sufficiently small, easily follows. According
to the definition in equation (\ref{eq:defVorcellLG})\ of a Voronoi cell using
the lexicographic rule, we can define
\[
\mathcal{H}_{p_{i}}(\mathcal{P})=%
%TCIMACRO{\dsum \limits_{b_{e}\in V_{i}^{\mathcal{C}_{r}^{\mathcal{N}}%
%}(\mathcal{P})}}%
%BeginExpansion
{\displaystyle\sum\limits_{b_{e}\in V_{i}^{\mathcal{C}_{r}^{\mathcal{N}}%
}(\mathcal{P})}}
%EndExpansion
f\left(  \left\Vert b_{e}-p_{i}\right\Vert \right)  \phi_{b_{e}}\text{,}%
\]
hence we can write $\mathcal{H}(\mathcal{P})=%
%TCIMACRO{\dsum \limits_{i=1}^{m}}%
%BeginExpansion
{\displaystyle\sum\limits_{i=1}^{m}}
%EndExpansion
\mathcal{H}_{p_{i}}(\mathcal{P})$. We must prove that $\mathcal{H}_{p_{i}%
}(\mathcal{P}^{(k+1)})\geq\mathcal{H}_{p_{i}}(\mathcal{P}^{(k)})$ for $\forall
p_{i}\in\mathcal{P}^{(k)}$ obeying the discrete-time evolution
(\ref{eq:graddyndisc}) and that $\exists p_{i}\in\mathcal{P}^{(k)}$ such that
$\mathcal{H}_{p_{i}}(\mathcal{P}^{(k+1)})>\mathcal{H}_{p_{i}}(\mathcal{P}%
^{(k)})$ if $\mathcal{P}^{(k)}$ does not belong to the set of critical points
of $\mathcal{H}$. Define $\mathcal{H}_{p_{i}}^{u}(\mathcal{P}^{(k+1)})=%
%TCIMACRO{\dsum \limits_{b_{e}\in V_{i}^{\mathcal{C}_{r}^{\mathcal{N}}%
%}(\mathcal{P}^{(k)})}}%
%BeginExpansion
{\displaystyle\sum\limits_{b_{e}\in V_{i}^{\mathcal{C}_{r}^{\mathcal{N}}%
}(\mathcal{P}^{(k)})}}
%EndExpansion
f\left(  \left\Vert b_{e}-p_{i}^{(k+1)}\right\Vert \right)  \phi_{b_{e}}$,
that is the cost $\mathcal{H}_{p_{i}}(\mathcal{\cdot})$ computed by using the
new sensors' positions $\mathcal{P}^{(k+1)}$ but the old allocation of
barycenters to sensors, i.e. the Voronoi partition generated by $\mathcal{P}%
^{(k)}$. Therefore, we write%
\begin{align}
\mathcal{H}_{p_{i}}^{u}(\mathcal{P}^{(k+1)})  &  =\mathcal{H}_{p_{i}%
}(\mathcal{P}^{(k)}+\delta_{k}^{i}\nabla_{l}\mathcal{H}(\mathcal{P}%
^{(k)}))\nonumber\\
&  =\mathcal{H}_{p_{i}}(\mathcal{P}^{(k)})+\delta_{k}^{i}\left(
\frac{\partial\mathcal{H}_{p_{i}}(\mathcal{P}^{(k)})}{\partial p_{i}}%
\cdot\left(  \nabla_{l}\mathcal{H}(\mathcal{P}^{(k)})\right)  _{i}\right)
+o\left(  (\delta_{k}^{i})^{2}\right) \nonumber\\
&  =\mathcal{H}_{p_{i}}(\mathcal{P}^{(k)})+\delta_{k}^{i}\left\Vert
\frac{\partial\mathcal{H}_{p_{i}}(\mathcal{P}^{(k)})}{\partial p_{i}%
}\right\Vert ^{2}+o\left(  (\delta_{k}^{i})^{2}\right)  \text{,}
\label{eq:Hu_colR2}%
\end{align}
where the (generalized) gradient is computed by means of the lexicographic
assignment based on the Voronoi partition generated by $\mathcal{P}^{(k)}$. If
the Voronoi partitions generated by $\mathcal{P}^{(k+1)}$ and $\mathcal{P}%
^{(k)}$\ coincide (same barycenters' allocation), then $\mathcal{H}_{p_{i}%
}(\mathcal{P}^{(k+1)})=\mathcal{H}_{p_{i}}^{u}(\mathcal{P}^{(k+1)})$ and we
can assert directly that $\mathcal{H}_{p_{i}}(\mathcal{P}^{(k+1)}%
)\geq\mathcal{H}_{p_{i}}(\mathcal{P}^{(k)})$ or $\mathcal{H}_{p_{i}%
}(\mathcal{P}^{(k+1)})>\mathcal{H}_{p_{i}}(\mathcal{P}^{(k)})$ if
$\frac{\partial\mathcal{H}_{p_{i}}(\mathcal{P}^{(k)})}{\partial p_{i}}\neq0$,
that is if $\mathcal{P}^{(k)}$ does not belong to the set of critical points
of $\mathcal{H}$. It is worth noting that similar remarks about the strict
inequality apply also in what follows and they will not be repeated. If the
barycenters' allocation is different, $\mathcal{H}_{p_{i}}(\mathcal{P}%
^{(k+1)})$ may be smaller than $\mathcal{H}_{p_{i}}^{u}(\mathcal{P}^{(k+1)})$
and we cannot say anything about its relation with $\mathcal{H}_{p_{i}%
}(\mathcal{P}^{(k)})$. This fact is due to the presence of barycenters
changing allocation during the sensors' evolution, hence the cells involved in
the change cannot be considered independently. Let us consider for simplicity
only one barycenter $b_{e}\in V_{j}^{\mathcal{C}_{r}^{\mathcal{N}}%
}(\mathcal{P}^{(k)})$ and suppose that at step $k+1$ $b_{e}\in V_{i}%
^{\mathcal{C}_{r}^{\mathcal{N}}}(\mathcal{P}^{(k+1)})$. With this assumption
no barycenters can enter or exit the union of the two cells but $b_{e}$. We
have%
\begin{align}
\mathcal{H}_{p_{i}}(\mathcal{P}^{(k+1)})  &  =\mathcal{H}_{p_{i}}%
^{u}(\mathcal{P}^{(k+1)})+f\left(  \left\Vert b_{e}-p_{i}^{(k+1)}\right\Vert
\right)  \phi_{b_{e}}\nonumber\\
\mathcal{H}_{p_{j}}(\mathcal{P}^{(k+1)})  &  =\mathcal{H}_{p_{j}}%
^{u}(\mathcal{P}^{(k+1)})-f\left(  \left\Vert b_{e}-p_{j}^{(k+1)}\right\Vert
\right)  \phi_{b_{e}}\text{.} \label{eq:Hu_contrib}%
\end{align}
In other words, $\mathcal{H}_{p_{i}}(\mathcal{P}^{(k+1)})$ is grown w.r.t. the
ideal value $\mathcal{H}_{p_{i}}^{u}(\mathcal{P}^{(k+1)})$ due to the
allocation of $b_{e}$ to $p_{i}^{(k+1)}$, whereas, $\mathcal{H}_{p_{j}%
}(\mathcal{P}^{(k+1)})$ is decreased w.r.t. $\mathcal{H}_{p_{i}}%
^{u}(\mathcal{P}^{(k+1)})$ by the contribution that $b_{e}$ would have given
if it were allocated to $p_{j}^{(k+1)}$ even at the step $k+1$. It is worth
noting that if $b_{e}$ changes allocation from $p_{j}^{(k+1)}$ to
$p_{i}^{(k+1)}$, then $\left\Vert b_{e}-p_{i}^{(k+1)}\right\Vert
\leq\left\Vert b_{e}-p_{j}^{(k+1)}\right\Vert $ (the equality holds only if
$b_{e}\in\partial V_{i}^{\mathcal{C}_{r}^{\mathcal{N}}}(\mathcal{P}%
^{(k+1)})\cap\partial V_{j}^{\mathcal{C}_{r}^{\mathcal{N}}}(\mathcal{P}%
^{(k+1)})$ and, being $i<j$, the allocation is induced by the lexicographic
rule). Therefore, due to the monotonicity of $f$, $f\left(  \left\Vert
b_{e}-p_{i}^{(k+1)}\right\Vert \right)  \geq f\left(  \left\Vert b_{e}%
-p_{j}^{(k+1)}\right\Vert \right)  $. Summing up, we have%
\begin{align*}
\mathcal{H}_{p_{i}}(\mathcal{P}^{(k+1)})+\mathcal{H}_{p_{j}}(\mathcal{P}%
^{(k+1)})  &  \geq\mathcal{H}_{p_{i}}^{u}(\mathcal{P}^{(k+1)})+\mathcal{H}%
_{p_{j}}^{u}(\mathcal{P}^{(k+1)})\\
&  =\mathcal{H}_{p_{i}}(\mathcal{P}^{(k)})+\delta_{k}^{i}\left\Vert
\frac{\partial\mathcal{H}_{p_{i}}(\mathcal{P}^{(k)})}{\partial p_{i}%
}\right\Vert ^{2}+o\left(  (\delta_{k}^{i})^{2}\right) \\
&  +\mathcal{H}_{p_{j}}(\mathcal{P}^{(k)})+\delta_{k}^{j}\left\Vert
\frac{\partial\mathcal{H}_{p_{j}}(\mathcal{P}^{(k)})}{\partial p_{j}%
}\right\Vert ^{2}+o\left(  (\delta_{k}^{j})^{2}\right)  \text{.}%
\end{align*}
There exist some constants $c_{k}^{ij},\delta_{k}^{ij}>0$ such that%
\[
\mathcal{H}_{p_{i}}(\mathcal{P}^{(k+1)})+\mathcal{H}_{p_{j}}(\mathcal{P}%
^{(k+1)})\geq\mathcal{H}_{p_{i}}(\mathcal{P}^{(k)})+\mathcal{H}_{p_{j}%
}(\mathcal{P}^{(k)})+\delta_{k}^{ij}c_{k}^{ij}+o\left(  (\delta_{k}^{ij}%
)^{2}\right)  \text{.}%
\]
Extending the same reasoning to more complex configurations involving more
than one common boundary edge and more than two neighboring cells, we can say
that $\exists c_{k},\delta_{k}>0$ such that%
\[
\mathcal{H}(\mathcal{P}^{(k+1)})=%
%TCIMACRO{\dsum \limits_{i=1}^{m}}%
%BeginExpansion
{\displaystyle\sum\limits_{i=1}^{m}}
%EndExpansion
\mathcal{H}_{p_{i}}(\mathcal{P}^{(k+1)})\geq\mathcal{H}(\mathcal{P}%
^{(k)})+\delta_{k}c_{k}+o(\delta_{k}^{2})\text{.}%
\]
This proves assertion i). Assertion ii) can be proved by exploiting the
results of Section 3.2 of \cite{CortesESAIM05}. More precisely, using the fact
that $\mathcal{P}^{(k)}$ is bounded and $f$ has locally bounded second
derivatives, then there exists $\bar{\delta}>0$ such that we can choose
$\delta_{k}\geq\bar{\delta}$. Then we conclude, using Proposition 3.4 of
\cite{CortesESAIM05}, that assertion ii) is true.
\end{proof}

\begin{remark}
[Distributed Implementation]The use of a gradient ascent algorithm based on a
Voronoi partition, allows us to solve not only a static deployment problem,
but also a dynamic one. As shown in \cite{CortesESAIM05}, this kind of
algorithms is spatially distributed, with the meaning that the $i$-th sensor
needs only to know the position of its neighbors in order to determine the
boundary of its cell and, hence, to compute $\frac{\partial\mathcal{H}_{p_{i}%
}}{\partial p_{i}}$. For the same reason the $i$-th sensor can choose the
value of the step-size $\delta_{k}^{i}$ independently of the other sensors
simply performing locally a classical line search algorithm.\ This property
makes the algorithm suitable for a spatially distributed implementation.

The independence in the choice of the step-sizes $\delta_{k}^{i}$ is obviously
preserved in each period $k$, as long as a synchronous implementation is
considered. In this case sensors have access to a global clock, or perform a
synchronization algorithm. At the beginning of the the $k$-th period (instant
$t_{k}$), all sensors are idle, build their Voronoi cells and compute their
gradients and step-sizes, then they move until, at most, the end of the period
(instant $t_{k+1}$). If, instead, an asynchronous implementation is
considered, further hypotheses are necessary to ensure that independence is
preserved. Unfortunately, the discontinuity of the gradient prevent us from
using the results of \cite{TsitsiklisTAC86}. But, if a sensor has the
capability to detect when its neighbors start and stop moving and when a new
sensor joins the neighborhood, the asynchronous algorithm presented in
\cite{CortesTRA04} (Table IV), can be applied, thus automatically recovering
the independence.
\end{remark}

\begin{remark}
In the previous theorem, for sake of simplicity, we did not consider
degenerate configurations where different sensors have the same position
($p_{i}=p_{j}$ for $i\neq j$). But it can be proved that if the initial
positions of sensors are not degenerate, sensors can always choose a suitable
$\delta_{k}^{i}$ to avoid the occurrence of these configurations.
\end{remark}

\subsection{Sensors Moving on the Network\label{sec:sensonN}}

This section is devoted to the case of sensors constrained to move on the
network and sensing a collapsed network. Therefore, these results are suitable
for an implementation of the second step of our procedure on hardware with
limited computational capabilities.

We still assume $f(\cdot)$ to be a continuously differentiable function and we
make use of the lexicographic criterion for the barycenter allocation. As
concerns sensors' motion, however, we cannot use directly the gradient since
the sensors have to remain on the network. We must consider now the
directional derivative of $\mathcal{H}$\ along the edges of the network.

Following the guidelines of the previous section, the following theorem can be proved.

\begin{theorem}
Given a network $\mathcal{N}=(\mathcal{V},\mathcal{S})$ and the related
$r$-collapsed network $\mathcal{C}_{r}^{\mathcal{N}}$, the multi-center
function $\mathcal{H}$ is continuously differentiable almost everywhere on
$\mathcal{N}^{m}$. In particular, on each open segment $s_{ij}^{o}$ such that
$s_{ij}\in\mathcal{S}$, given the unit vector $w_{ij}$ such that $s_{ij}\cdot
w_{ij}=\left\Vert s_{ij}\right\Vert $, the directional derivative in $p_{h}\in
s_{ij}^{o}$ along $w_{ij}$ is
\begin{align}
D_{w_{ij}}\mathcal{H(P)}[p_{h}]  &  =\left(  \frac{\partial_{l}\mathcal{H}%
}{\partial p_{h}}(\mathcal{P})\cdot w_{ij}\right)  w_{ij}%
\label{eq:def_direcder_on_s}\\
\frac{\partial_{l}\mathcal{H}}{\partial p_{h}}(\mathcal{P})  &  =%
%TCIMACRO{\dsum \limits_{b_{e}\in V_{h}^{\mathcal{C}_{r}^{\mathcal{N}}%
%}(\mathcal{P})}}%
%BeginExpansion
{\displaystyle\sum\limits_{b_{e}\in V_{h}^{\mathcal{C}_{r}^{\mathcal{N}}%
}(\mathcal{P})}}
%EndExpansion
\frac{\partial}{\partial p_{h}}f\left(  \left\Vert b_{e}-p_{h}\right\Vert
\right)  \phi_{b_{e}}\text{.}\nonumber
\end{align}

\end{theorem}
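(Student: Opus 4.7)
My plan is to reduce the statement to the previous theorem, Theorem~\ref{th:grad_bar}, by regarding the restriction of $\mathcal{H}$ to $\mathcal{N}^m$ as the restriction of the function $\mathcal{H}$ on $(\mathbb{R}^2)^m$ already studied in Section~\ref{sec:SensinR2}. The key observations are that (i) the discontinuities of $f$ cut the network in only a negligible set, (ii) Voronoi boundary configurations form a negligible set of sensor configurations in $\mathcal{N}^m$, and (iii) away from these sets a small displacement of $p_h$ along the segment does not alter the lexicographic Voronoi assignment, so the unconstrained gradient formula from Theorem~\ref{th:grad_bar} transfers verbatim.

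First I would identify the exceptional null set. By Theorem~\ref{th:grad_bar}, $\mathcal{H}$ on $(\mathbb{R}^2)^m$ is continuously differentiable outside $(\mathcal{D}_{\mathcal{C}_r^{\mathcal{N}}})^m$. The set $\mathcal{D}_{\mathcal{C}_r^{\mathcal{N}}}$ is a finite union of circles (centers in $\mathcal{C}_r^{\mathcal{N}}$, radii $R_1,\ldots,R_N$), and each such circle meets every segment $s_{ij}\in\mathcal{S}$ in at most two points. Since both $\mathcal{C}_r^{\mathcal{N}}$ and $\mathcal{S}$ are finite, $\mathcal{D}_{\mathcal{C}_r^{\mathcal{N}}}\cap\mathcal{N}$ is finite, hence has arc-length measure zero on $\mathcal{N}$. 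The vertex set $\mathcal{V}$ is also finite, and the set of $\mathcal{P}\in\mathcal{N}^m$ for which some $b_e$ lies on a Voronoi boundary $\partial V_i^{\mathcal{C}_r^{\mathcal{N}}}(\mathcal{P})$ is carved out by finitely many equalities $\|b_e-p_i\|=\|b_e-p_j\|$, so it is a null subset of $\mathcal{N}^m$. Removing all these pieces gives a full-measure set on which I aim to establish continuous differentiability.

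Next, I would compute the directional derivative at a point $\mathcal{P}$ in the good set with $p_h\in s_{ij}^o$. Choosing $\varepsilon>0$ small, the perturbed configuration $\mathcal{P}_\varepsilon$ obtained by replacing $p_h$ with $p_h+\varepsilon w_{ij}$ remains in the good set; in particular no barycenter crosses a Voronoi boundary, so the lexicographic Voronoi cells are unchanged between $\mathcal{P}$ and $\mathcal{P}_\varepsilon$. Applying Theorem~\ref{th:grad_bar} together with the lexicographic extension \eqref{eq:def_gradcomp_bar_LG} yields
\[
\mathcal{H}(\mathcal{P}_\varepsilon)-\mathcal{H}(\mathcal{P})=\varepsilon\,\Bigl(\tfrac{\partial_l\mathcal{H}}{\partial p_h}(\mathcal{P})\cdot w_{ij}\Bigr)+o(\varepsilon).
\]
Dividing by $\varepsilon$ and letting $\varepsilon\to0$ gives the scalar directional derivative along $w_{ij}$; multiplying by $w_{ij}$ produces the vector-valued expression \eqref{eq:def_direcder_on_s}. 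Continuity of this directional derivative as $\mathcal{P}$ varies in the good set follows from the continuity of $\frac{\partial_l\mathcal{H}}{\partial p_h}$ there, inherited again from Theorem~\ref{th:grad_bar}.

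The main obstacle I anticipate is bookkeeping the lexicographic Voronoi assignment under perturbations of $p_h$ along $w_{ij}$. At bad points a barycenter may sit exactly on $\partial V_h^{\mathcal{C}_r^{\mathcal{N}}}(\mathcal{P})$, and whether it is counted in $V_h^{\mathcal{C}_r^{\mathcal{N}}}(\mathcal{P}_\varepsilon)$ for $\varepsilon\to0^+$ may differ from $\varepsilon\to0^-$, producing a jump in the derivative; this is precisely why the ``almost everywhere'' qualifier is needed and why I remove the boundary configurations above. Once that null set is carved out, the formula is a direct transcription of Theorem~\ref{th:grad_bar} projected onto the admissible direction of motion along the edge.
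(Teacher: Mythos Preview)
Your proposal is correct and matches the paper's approach: the paper simply states that the theorem ``can be proved'' by ``following the guidelines of the previous section,'' i.e., by specializing Theorem~\ref{th:grad_bar} and the lexicographic gradient \eqref{eq:def_gradcomp_bar_LG} to sensors constrained on the segments of $\mathcal{N}$ and projecting onto the edge direction $w_{ij}$. Your identification of the negligible bad set (discontinuity circles, vertices, Voronoi-boundary configurations) and the first-order expansion along $w_{ij}$ are exactly the details the paper leaves implicit; note that in this subsection the paper has already assumed $f$ continuously differentiable, so the $\mathcal{D}_{\mathcal{C}_r^{\mathcal{N}}}$ part of your null set is in fact empty, but this does not affect the validity of your argument.
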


The directional derivative is a multivalued function on the vertices of the
network as more than one edge can share the same vertex, but we need a
univocal definition. Hence, we fix a choice rule such that the directional
derivative in a vertex is given by the maximum among all the derivatives
defined for each possible direction that does not lead the sensor out of the
network. If all the directional derivatives in a vertex point outward the
network, then the derivative is set equal to zero.

\begin{definition}
\label{def:direcder_complete}Given the set
\begin{multline}
S_{v_{i}}=\left\{  s\in\mathcal{S}\mid\exists v_{j}\in\mathcal{V},\exists
\bar{\delta}>0\text{ s.t. }s=[v_{i},v_{j}]\vee s=[v_{j},v_{i}],\right. \\
\left.  \forall\delta\in\lbrack0,\bar{\delta}]~v_{i}+\delta D_{w_{ij}%
}\mathcal{H(P)}[v_{i}]\in\mathcal{N}\right\}  \text{,} \label{eq:def_setSvi}%
\end{multline}
we define the directional derivative of $\mathcal{H(P)}$ in any point
$p_{h}\in\mathcal{N}$ as follows%
\begin{equation}
\widetilde{D}_{h}\mathcal{H(P)=}\left\{
\begin{array}
[c]{cc}%
\begin{array}
[c]{c}%
D_{w_{ij}}\mathcal{H(P)}[p_{h}]\text{ given by (\ref{eq:def_direcder_on_s})}\\
\text{ }%
\end{array}
&
\begin{array}
[c]{c}%
\forall p_{h}\in\mathcal{N}\setminus\mathcal{V}\\
\text{ }%
\end{array}
\\%
\begin{array}
[c]{c}%
\max\limits_{s_{ij}\in S_{v_{i}}}D_{w_{ij}}\mathcal{H(P)}[p_{h}]\\
\text{ }%
\end{array}
&
\begin{array}
[c]{c}%
\begin{array}
[c]{c}%
\forall p_{h}\equiv v_{i}\in\mathcal{V}\\
S_{v_{i}}\neq\emptyset
\end{array}
\\
\text{ }%
\end{array}
\\
0 &
\begin{array}
[c]{c}%
\forall p_{h}\equiv v_{i}\in\mathcal{V}\\
S_{v_{i}}=\emptyset
\end{array}
\end{array}
\right.  \label{eq:def_direcder_complete}%
\end{equation}

\end{definition}

We can now define the discrete-time gradient-like algorithm.

\begin{theorem}
\label{Th:discrevol_colN}Consider the following discrete-time evolution for
the sensors' positions%
\begin{equation}
\mathcal{P}^{(k+1)}=\mathcal{P}^{(k)}+\delta_{k}\widetilde{D}\mathcal{H}%
(\mathcal{P}^{(k)})\text{,}%
\end{equation}
where the $h$-th component of $\widetilde{D}\mathcal{H}$ is given by
(\ref{eq:def_direcder_complete}) and $\mathcal{H}:\mathcal{N}^{m}%
\rightarrow\mathbb{R}$ as in (\ref{eq:defH_bar_Vor}). If $f(\cdot)$ has
locally bounded second derivatives, then, for suitable $\delta_{k}$,
$\mathcal{P}^{(k)}$ lies in a bounded set and

\begin{description}
\item[i)] $\mathcal{H}(\mathcal{P}^{(k)})$ is monotonically nondecreasing;

\item[ii)] $\mathcal{P}^{(k)}$ converges to the set of critical points of
$\mathcal{H}$.
\end{description}
\end{theorem}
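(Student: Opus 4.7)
The plan is to adapt the proof of Theorem~\ref{Th:discrevol_colR2} step by step, substituting the constrained directional derivative $\widetilde{D}\mathcal{H}$ for the unconstrained lexicographic gradient $\nabla_l\mathcal{H}$. Boundedness of $\mathcal{P}^{(k)}$ is essentially automatic, since the iterate lives in $\mathcal{N}^m$ and $\mathcal{N}$ is a bounded subset of $\mathbb{R}^2$. The role of ``suitable $\delta_k$'' is to guarantee that (a) each sensor currently at an interior point of an edge $s_{ij}^o$ does not overshoot the nearest endpoint of $s_{ij}$ during the update, and (b) each sensor currently at a vertex does not move past the far endpoint of the chosen edge in $S_{v_i}$. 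Both are achieved by an upper bound on $\delta_k^i$ proportional to the length of the relevant edge segment divided by the norm of the directional derivative.

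For assertion i), write $\mathcal{H}(\mathcal{P}) = \sum_{i=1}^m \mathcal{H}_{p_i}(\mathcal{P})$ with $\mathcal{H}_{p_i}$ defined from the lexicographic Voronoi cells of (\ref{eq:defVorcellLG}). A first-order Taylor expansion with the old barycenter allocation gives, whenever $p_i^{(k)} \in s_{ij}^o$,
\[
\mathcal{H}_{p_i}^u(\mathcal{P}^{(k+1)}) = \mathcal{H}_{p_i}(\mathcal{P}^{(k)}) + \delta_k^i\left(\frac{\partial_l\mathcal{H}_{p_i}}{\partial p_i}\cdot w_{ij}\right)^2 + o\bigl((\delta_k^i)^2\bigr),
\]
so this ``unreallocated'' version of $\mathcal{H}_{p_i}$ is nondecreasing. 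If $p_i^{(k)}$ coincides with a vertex $v_l$, the same estimate holds with $w_{ij}$ replaced by the direction selected by the $\max$-rule of Definition~\ref{def:direcder_complete}: either the tangential derivative along that direction is strictly positive (strict increase) or $\widetilde{D}_i\mathcal{H}=0$ and the sensor is stationary. The barycenter-reallocation step is then identical to the one in Theorem~\ref{Th:discrevol_colR2}: when a barycenter $b_e$ switches from $V_j^{\mathcal{C}_r^{\mathcal{N}}}(\mathcal{P}^{(k)})$ to $V_i^{\mathcal{C}_r^{\mathcal{N}}}(\mathcal{P}^{(k+1)})$, the inequality $\|b_e - p_i^{(k+1)}\|\leq\|b_e - p_j^{(k+1)}\|$, the lexicographic tie-breaking and the monotonicity of $f$ together yield $\mathcal{H}_{p_i}(\mathcal{P}^{(k+1)})+\mathcal{H}_{p_j}(\mathcal{P}^{(k+1)}) \geq \mathcal{H}_{p_i}^u(\mathcal{P}^{(k+1)})+\mathcal{H}_{p_j}^u(\mathcal{P}^{(k+1)})$. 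Summing over the cells affected by reallocation produces constants $c_k \geq 0$ and $\delta_k>0$ with
\[
\mathcal{H}(\mathcal{P}^{(k+1)}) \geq \mathcal{H}(\mathcal{P}^{(k)}) + \delta_k c_k + o(\delta_k^2),
\]
and $c_k>0$ off the critical set, which proves i).

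For assertion ii), the key is to exhibit a uniform lower bound $\bar{\delta}>0$ on the admissible step sizes. Since $\mathcal{P}^{(k)}$ stays in the compact set $\mathcal{N}^m$ and $f$ has locally bounded second derivatives, the Taylor remainder above is uniformly controlled and the strict ascent inequality survives for any $\delta_k\geq\bar{\delta}$; possible capping at the first vertex encountered along an edge can only enlarge $\mathcal{H}$. Proposition~3.4 of \cite{CortesESAIM05} then delivers convergence to the set of critical points of $\mathcal{H}$, which in the constrained setting are exactly the configurations where every sensor is either interior to an edge with vanishing tangential derivative, or at a vertex $v_l$ with $S_{v_l}=\emptyset$.

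The main obstacle is the vertex case: one has to ensure that the $\max$-based rule in Definition~\ref{def:direcder_complete} is compatible both with the per-step monotonicity argument (ties between several outgoing edges must not break the ascent) and with the uniform lower bound $\bar{\delta}$ required by Proposition~3.4 (the edge-confinement cap on $\delta_k^i$ must not force it to zero). Both issues dissolve once boundedness is in place: finitely many edges of positive length and locally Lipschitz $\nabla f$ yield a uniform minimum admissible step size, and the lexicographic rule together with the $\max$-selection keeps the monotonicity proof formally identical to the $\mathbb{R}^2$ case.
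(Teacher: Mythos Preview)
Your proposal is correct and follows essentially the same approach as the paper: adapt the proof of Theorem~\ref{Th:discrevol_colR2} by replacing the lexicographic gradient with the directional derivative $\widetilde{D}\mathcal{H}$, so that the key Taylor expansion becomes $\mathcal{H}_{p_i}^u(\mathcal{P}^{(k+1)})=\mathcal{H}_{p_i}(\mathcal{P}^{(k)})+\delta_k^i\bigl(\tfrac{\partial\mathcal{H}_{p_i}}{\partial p_i}\cdot w_{hl}\bigr)^2+o((\delta_k^i)^2)$, and the reallocation and convergence arguments carry over unchanged. Your treatment is in fact more detailed than the paper's---the explicit boundedness via $\mathcal{N}^m$, the vertex-case discussion, and the edge-confinement step-size caps are all points the paper leaves implicit.
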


\begin{proof}
The proof is essentially the same of the Theorem \ref{Th:discrevol_colR2},
except for the use of the derivative (\ref{eq:def_direcder_complete}) in place
of the gradient (\ref{eq:def_gradcomp_bar_LG}). In particular equality
(\ref{eq:Hu_colR2}) becomes%
\begin{align*}
\mathcal{H}_{p_{i}}^{u}(\mathcal{P}^{(k+1)})  &  =\mathcal{H}_{p_{i}%
}(\mathcal{P}^{(k)}+\delta_{k}^{i}\widetilde{D}\mathcal{H}(\mathcal{P}%
^{(k)}))\\
&  =\mathcal{H}_{p_{i}}(\mathcal{P}^{(k)})+\delta_{k}^{i}\left(
\frac{\partial\mathcal{H}_{p_{i}}(\mathcal{P}^{(k)})}{\partial p_{i}}%
\cdot\left(  \widetilde{D}\mathcal{H}(\mathcal{P}^{(k)})\right)  _{i}\right)
+o\left(  (\delta_{k}^{i})^{2}\right) \\
&  =\mathcal{H}_{p_{i}}(\mathcal{P}^{(k)})+\delta_{k}^{i}\left(
\frac{\partial\mathcal{H}_{p_{i}}(\mathcal{P}^{(k)})}{\partial p_{i}}\cdot
w_{hl}\right)  ^{2}+o\left(  (\delta_{k}^{i})^{2}\right)  \text{,}%
\end{align*}
for $p_{i}$ moving along $w_{hl}$.
\end{proof}

\section{Deployment over a Full Network\label{sec:FullNet}}

In this section we consider a more accurate version of the second step of the
optimization procedure, namely sensors constrained on the network and sensing
the full network. To start with, let us define the boundary of a Voronoi cell
as $\partial V_{i}^{\mathcal{N}}(\mathcal{P})=\{q\in\mathcal{N}\mid\left\Vert
q-p_{i}\right\Vert =\left\Vert q-p_{j}\right\Vert ,$~$\exists p_{j}%
\in\mathcal{P\}}$, and the instantaneous discontinuity set of $f(\cdot)$ as%
\[
\mathcal{D}_{\mathcal{N}}(\mathcal{P})\triangleq%
%TCIMACRO{\dbigcup \nolimits_{p_{j}\in\mathcal{P}}}%
%BeginExpansion
{\displaystyle\bigcup\nolimits_{p_{j}\in\mathcal{P}}}
%EndExpansion
\left\{  q\in\mathcal{N}\mid\left\Vert q-p_{j}\right\Vert =R_{i},~\forall
i=1,\ldots,N\right\}  \text{.}%
\]

\begin{assumption}
\label{Ass_fullnet}We make the following assumptions:

\begin{description}
\item[i)] orthogonality assumption: $\forall h,k\in\left\{  1,\ldots
,n\right\}  $, $\forall i\in\left\{  1,\ldots,m\right\}  $ and for any segment
$s=[a,b]\subseteq s_{hk}\in\mathcal{S}$ with $a\neq b$, $s\not \subset
\partial V_{i}^{\mathcal{N}}(\mathcal{P})$;

\item[ii)] $\partial V_{i}^{\mathcal{N}}(\mathcal{P})\cap\mathcal{D}%
_{\mathcal{N}}(\mathcal{P})=\emptyset$, $\forall i\in\left\{  1,\ldots
,m\right\}  $;

\item[iii)] $\mathcal{V}\cap\mathcal{D}_{\mathcal{N}}(\mathcal{P})=\emptyset$;

\item[iv)] $\forall h,k\in\left\{  1,\ldots,n\right\}  $, $\forall
i\in\left\{  1,\ldots,m\right\}  $, $\forall q\in s_{hk}\cap V_{i}%
^{\mathcal{N}}(\mathcal{P})$, if $(q-p_{i})\cdot(v_{h}-v_{k})=0\Rightarrow
\left\Vert q-p_{i}\right\Vert \notin\left\{  R_{1},\ldots,R_{N}\right\}  $;
\end{description}
\end{assumption}

With the orthogonality assumption the expression (\ref{eq:defH_Vor})
simplifies to%
\begin{equation}
\mathcal{H}(\mathcal{P})=%
%TCIMACRO{\dsum \limits_{i=1}^{m}}%
%BeginExpansion
{\displaystyle\sum\limits_{i=1}^{m}}
%EndExpansion%
%TCIMACRO{\dint \nolimits_{V_{i}^{\mathcal{N}}(\mathcal{P})}}%
%BeginExpansion
{\displaystyle\int\nolimits_{V_{i}^{\mathcal{N}}(\mathcal{P})}}
%EndExpansion
f\left(  \left\Vert q-p_{i}\right\Vert \right)  \phi\left(  q\right)
dq\text{.} \label{eq:defH_Vor_red}%
\end{equation}

\begin{theorem}
Given a network $\mathcal{N}=(\mathcal{V},\mathcal{S})$ if Assumption
\ref{Ass_fullnet} holds, the multi-center function $\mathcal{H}$ is
continuously differentiable almost everywhere on $\mathcal{N}^{m}$. In
particular, on each open segment $s_{ij}^{o}$ such that $s_{ij}\in\mathcal{S}%
$, given the unit vector $w_{ij}$ such that $s_{ij}\cdot w_{ij}=\left\Vert
s_{ij}\right\Vert $, the directional derivative in $p_{h}\in s_{ij}^{o}$ along
$w_{ij}$ is
\begin{gather}
D_{w_{ij}}\mathcal{H(P)}[p_{h}]=\left(  \frac{\partial\mathcal{H}}{\partial
p_{h}}(\mathcal{P})\cdot w_{ij}\right)  w_{ij} \label{eq:def_direcderNET_on_s}%
\\
\frac{\partial\mathcal{H}}{\partial p_{h}}(\mathcal{P})=%
%TCIMACRO{\dsum \limits_{k=1}^{M_{h}(\mathcal{P})}}%
%BeginExpansion
{\displaystyle\sum\limits_{k=1}^{M_{h}(\mathcal{P})}}
%EndExpansion
\mathcal{I}_{k}\nonumber\\
\mathcal{I}_{k}=\left\Vert b_{k}-a_{k}\right\Vert
%TCIMACRO{\dint \nolimits_{[0,1]\backslash\left\{  t_{1,1}^{k},t_{1,2}%
%^{k},\ldots,t_{N,1}^{k},t_{N,2}^{k}\right\}  }}%
%BeginExpansion
{\displaystyle\int\nolimits_{[0,1]\backslash\left\{  t_{1,1}^{k},t_{1,2}%
^{k},\ldots,t_{N,1}^{k},t_{N,2}^{k}\right\}  }}
%EndExpansion
\frac{\partial}{\partial\nu}f\left(  \nu\right)  \frac{p_{h}-\gamma_{k}%
(t)}{\left\Vert p_{h}-\gamma_{k}(t)\right\Vert }\phi\left(  \gamma
_{k}(t)\right)  dt\nonumber\\
+\left\Vert b_{k}-a_{k}\right\Vert
%TCIMACRO{\dsum \limits_{\alpha=1}^{N}}%
%BeginExpansion
{\displaystyle\sum\limits_{\alpha=1}^{N}}
%EndExpansion
\left(  f_{\alpha+1}(R_{\alpha})-f_{\alpha}(R_{\alpha})\right)
%TCIMACRO{\dsum \limits_{j=1}^{2}}%
%BeginExpansion
{\displaystyle\sum\limits_{j=1}^{2}}
%EndExpansion
\frac{p_{h}-\gamma_{k}(t_{\alpha,j}^{k})}{\left\Vert p_{h}-\gamma
_{k}(t_{\alpha,j}^{k})\right\Vert }\phi\left(  \gamma_{k}(t_{\alpha,j}%
^{k})\right)  \text{,}\nonumber
\end{gather}
where $\gamma_{k}(t)=a_{k}+\left(  b_{k}-a_{k}\right)  t,$ $t\in\lbrack0,1]$
is a parameterization for the $k$-th segment $[a_{k},b_{k}]\in V_{h}%
^{\mathcal{N}}(\mathcal{P})$, $M_{h}(\mathcal{P})$ is the number of segments
in $V_{h}^{\mathcal{N}}(\mathcal{P})$ and $t_{\alpha,j}^{k}\in\lbrack
0,1],~j\in\{1,2\}$ are the zeros of $\left\Vert \gamma_{k}(t)-p_{h}\right\Vert
-R_{\alpha}=0$ (if any).
\end{theorem}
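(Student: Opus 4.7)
My plan is to mimic the strategy from the analogous theorem for the collapsed-network case, adapted to the fact that the integration domain is a one-dimensional continuum of segments rather than a discrete set of barycenters. The orthogonality part of Assumption~\ref{Ass_fullnet} reduces $\mathcal{H}$ to the simpler form (\ref{eq:defH_Vor_red}), so I only need to differentiate a sum of line integrals of $f(\|q-p_h\|)\phi(q)$ over the Voronoi cells. I would decompose $V_h^{\mathcal{N}}(\mathcal{P})=\bigcup_{k=1}^{M_h(\mathcal{P})}[a_k,b_k]$ into maximal sub‑segments and parameterize each one by $\gamma_k(t)=a_k+(b_k-a_k)t$ with Jacobian $\|b_k-a_k\|$, so that the contribution of segment $k$ to $\mathcal{H}_h$ becomes $\|b_k-a_k\|\int_0^1 f(\|\gamma_k(t)-p_h\|)\phi(\gamma_k(t))\,dt$.

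The next step is to split the integral over $[0,1]$ at the discontinuity times $t_{\alpha,j}^k\in[0,1]$, which are the zeros of $\|\gamma_k(t)-p_h\|-R_\alpha=0$. Here Assumption~\ref{Ass_fullnet}(ii)--(iv) play the crucial roles: (ii) keeps the discontinuity points of $f$ away from the Voronoi boundary; (iii) keeps them away from network vertices; and (iv) (non‑tangency) guarantees that $\partial_t\|\gamma_k(t)-p_h\|\neq 0$ at $t_{\alpha,j}^k$, so by the implicit function theorem $t_{\alpha,j}^k$ depends smoothly on $p_h$. On each sub‑interval between consecutive breakpoints $f$ coincides with one of the smooth pieces $f_\alpha$, so one can differentiate under the integral; summing the interior contributions recovers the first integral in the expression for $\mathcal{I}_k$, using $\partial_{p_h}\|q-p_h\|=(p_h-q)/\|p_h-q\|$. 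Each moving internal endpoint then contributes, through Leibniz's rule, a jump term proportional to $f_{\alpha+1}(R_\alpha)-f_\alpha(R_\alpha)$ evaluated at $\gamma_k(t_{\alpha,j}^k)$, which gives the second sum.

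The boundary contributions at the endpoints $a_k,b_k$ need separate treatment. Two cases arise: either the endpoint is a network vertex (fixed, hence no Leibniz contribution), or it is a point of $\partial V_h^{\mathcal{N}}(\mathcal{P})\cap\partial V_i^{\mathcal{N}}(\mathcal{P})$ for some neighbouring cell. In the latter case the endpoint moves with $p_h$, but the classical Voronoi cancellation applies: the equidistance $\|q-p_h\|=\|q-p_i\|$ on the shared boundary forces the integrands of $\mathcal{H}_h$ and $\mathcal{H}_i$ to agree there, and the Leibniz boundary terms coming from $\mathcal{H}_h$ and from $\mathcal{H}_i$ cancel when summing the full $\mathcal{H}=\sum_i\mathcal{H}_i$. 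Assumption~\ref{Ass_fullnet}(i) prevents the degenerate situation in which an entire sub‑segment would lie on a Voronoi boundary (which would make the cancellation multi‑valued). Finally, on the open segment $s_{ij}^o$ the admissible perturbations of $p_h$ are tangent to $w_{ij}$, so the directional derivative is obtained by projecting $\partial\mathcal{H}/\partial p_h$ onto $w_{ij}$, giving formula (\ref{eq:def_direcderNET_on_s}).

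The hard part is the bookkeeping of the various moving endpoints: distinguishing intrinsic breakpoints (the $t_{\alpha,j}^k$, which genuinely contribute jump terms) from the Voronoi boundary endpoints (which appear to move but cancel across cells) and from the fixed network vertices. The continuous differentiability \emph{almost everywhere} then follows because each of the ingredients --- the quantities $t_{\alpha,j}^k$, the segment structure of $V_h^{\mathcal{N}}(\mathcal{P})$, and the summands $\mathcal{I}_k$ --- depends smoothly on $\mathcal{P}$ as long as Assumption~\ref{Ass_fullnet} is in force, and the configurations violating it form a negligible subset of $\mathcal{N}^m$.
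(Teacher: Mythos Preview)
Your proposal is correct and follows essentially the same route as the paper: reduce $\mathcal{H}$ to the form (\ref{eq:defH_Vor_red}) via the orthogonality assumption, show that the moving Voronoi boundary contributions from $\mathcal{H}_h$ and its neighbours cancel (the paper does this by an explicit difference-quotient computation with the sets $\Delta V_{ih}^{\mathcal N}(\mathcal P)^{\pm}$, you invoke the same cancellation more directly via Leibniz), and then differentiate each segment integral with moving internal breakpoints $t_{\alpha,j}^k$. The only organisational difference is that the paper packages the last step as an application of the appendix result Theorem~\ref{Th:derseg} (using a Dirac-delta formalism), whereas you carry out the equivalent Leibniz/implicit-function argument inline; the roles you assign to parts (ii)--(iv) of Assumption~\ref{Ass_fullnet} match exactly what the paper uses.
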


\begin{proof}
Consider the gradient of $\mathcal{H}(\mathcal{P})$ in the form
(\ref{eq:defH_Vor_red})%
\begin{align}
\frac{\partial\mathcal{H}}{\partial p_{h}}(\mathcal{P})  &  =\frac{\partial
}{\partial p_{h}}%
%TCIMACRO{\dsum \limits_{i=1}^{m}}%
%BeginExpansion
{\displaystyle\sum\limits_{i=1}^{m}}
%EndExpansion%
%TCIMACRO{\dint \nolimits_{V_{i}^{\mathcal{N}}(\mathcal{P})}}%
%BeginExpansion
{\displaystyle\int\nolimits_{V_{i}^{\mathcal{N}}(\mathcal{P})}}
%EndExpansion
f\left(  \left\Vert q-p_{i}\right\Vert \right)  \phi\left(  q\right)
dq\nonumber\\
&  =\frac{\partial}{\partial p_{h}}%
%TCIMACRO{\dint \nolimits_{V_{h}^{\mathcal{N}}(\mathcal{P})}}%
%BeginExpansion
{\displaystyle\int\nolimits_{V_{h}^{\mathcal{N}}(\mathcal{P})}}
%EndExpansion
f\left(  \left\Vert q-p_{h}\right\Vert \right)  \phi\left(  q\right)
dq\nonumber\\
&  +\frac{\partial}{\partial p_{h}}%
%TCIMACRO{\dsum \limits_{\substack{i=1\\i\neq h}}^{m}}%
%BeginExpansion
{\displaystyle\sum\limits_{\substack{i=1\\i\neq h}}^{m}}
%EndExpansion%
%TCIMACRO{\dint \nolimits_{V_{i}^{\mathcal{N}}(\mathcal{P})}}%
%BeginExpansion
{\displaystyle\int\nolimits_{V_{i}^{\mathcal{N}}(\mathcal{P})}}
%EndExpansion
f\left(  \left\Vert q-p_{i}\right\Vert \right)  \phi\left(  q\right)
dq\text{.} \label{eq:grad_full}%
\end{align}
Let us consider the second term of (\ref{eq:grad_full}) for each $i\neq h$%
\begin{align}
&  \frac{\partial}{\partial p_{h}}%
%TCIMACRO{\dint \nolimits_{V_{i}^{\mathcal{N}}(\mathcal{P})}}%
%BeginExpansion
{\displaystyle\int\nolimits_{V_{i}^{\mathcal{N}}(\mathcal{P})}}
%EndExpansion
f\left(  \left\Vert q-p_{i}\right\Vert \right)  \phi\left(  q\right)
dq\nonumber\\
&  =\lim_{\varepsilon\rightarrow0}\frac{1}{\left\Vert \varepsilon\right\Vert
}\left(
%TCIMACRO{\dint \nolimits_{V_{i}^{\mathcal{N}}(\left\{  p_{1},\ldots
%,p_{h}+\varepsilon,\ldots,p_{m}\right\}  )}}%
%BeginExpansion
{\displaystyle\int\nolimits_{V_{i}^{\mathcal{N}}(\left\{  p_{1},\ldots
,p_{h}+\varepsilon,\ldots,p_{m}\right\}  )}}
%EndExpansion
f\left(  \left\Vert q-p_{i}\right\Vert \right)  \phi\left(  q\right)  dq-%
%TCIMACRO{\dint \nolimits_{V_{i}^{\mathcal{N}}(\mathcal{P})}}%
%BeginExpansion
{\displaystyle\int\nolimits_{V_{i}^{\mathcal{N}}(\mathcal{P})}}
%EndExpansion
f\left(  \left\Vert q-p_{i}\right\Vert \right)  \phi\left(  q\right)
dq\right) \nonumber\\
&  =\lim_{\varepsilon\rightarrow0}\frac{1}{\left\Vert \varepsilon\right\Vert
}\left(
%TCIMACRO{\dint \nolimits_{\Delta V_{ih}^{\mathcal{N}}(\mathcal{P})^{+}}}%
%BeginExpansion
{\displaystyle\int\nolimits_{\Delta V_{ih}^{\mathcal{N}}(\mathcal{P})^{+}}}
%EndExpansion
f\left(  \left\Vert q-p_{i}\right\Vert \right)  \phi\left(  q\right)  dq-%
%TCIMACRO{\dint \nolimits_{\Delta V_{ih}^{\mathcal{N}}(\mathcal{P})^{-}}}%
%BeginExpansion
{\displaystyle\int\nolimits_{\Delta V_{ih}^{\mathcal{N}}(\mathcal{P})^{-}}}
%EndExpansion
f\left(  \left\Vert q-p_{i}\right\Vert \right)  \phi\left(  q\right)
dq\right)  \text{,} \label{eq:secterm_full}%
\end{align}
where $\Delta V_{ih}^{\mathcal{N}}(\mathcal{P})^{+}=V_{i}^{\mathcal{N}%
}(\left\{  p_{1},\ldots,p_{h}+\varepsilon,\ldots,p_{m}\right\}  )\setminus
V_{i}^{\mathcal{N}}(\mathcal{P})$ and\linebreak\ $\Delta V_{ih}^{\mathcal{N}%
}(\mathcal{P})^{-}=V_{i}^{\mathcal{N}}(\mathcal{P})\setminus V_{i}%
^{\mathcal{N}}(\{p_{1},\ldots,p_{h}+\varepsilon,\ldots,p_{m}\})$. It is worth
noting that $V_{i}^{\mathcal{N}}(\{p_{1},\ldots,p_{h}+\varepsilon,\ldots
,p_{m}\})$ can be different from $V_{i}^{\mathcal{N}}(\mathcal{P})$ if and
only if $p_{i}\in\mathcal{N}_{\mathcal{G}_{D}}(p_{h},\mathcal{P}%
)$.\footnote{With $\mathcal{N}_{\mathcal{G}_{D}}(p_{h},\mathcal{P})$ we
represent the set of neighbors of $p_{h}$ in $\mathcal{P}$. The neighboring
property is given w.r.t. the proximity graph $\mathcal{G}_{D}(\mathcal{P})$,
that is the Delaunay graph associated to the Voronoi partition induced by
$\mathcal{P}$.} Let us consider now the first term of (\ref{eq:grad_full})%
\begin{align}
&  \frac{\partial}{\partial p_{h}}%
%TCIMACRO{\dint \nolimits_{V_{h}^{\mathcal{N}}(\mathcal{P})}}%
%BeginExpansion
{\displaystyle\int\nolimits_{V_{h}^{\mathcal{N}}(\mathcal{P})}}
%EndExpansion
f\left(  \left\Vert q-p_{h}\right\Vert \right)  \phi\left(  q\right)
dq\nonumber\\
&  =\lim_{\varepsilon\rightarrow0}\frac{1}{\left\Vert \varepsilon\right\Vert
}\left(
%TCIMACRO{\dint \nolimits_{V_{h}^{\mathcal{N}}(\left\{  p_{1},\ldots
%,p_{h}+\varepsilon,\ldots,p_{m}\right\}  )}}%
%BeginExpansion
{\displaystyle\int\nolimits_{V_{h}^{\mathcal{N}}(\left\{  p_{1},\ldots
,p_{h}+\varepsilon,\ldots,p_{m}\right\}  )}}
%EndExpansion
f\left(  \left\Vert q-\left(  p_{h}+\varepsilon\right)  \right\Vert \right)
\phi\left(  q\right)  dq-%
%TCIMACRO{\dint \nolimits_{V_{h}^{\mathcal{N}}(\mathcal{P})}}%
%BeginExpansion
{\displaystyle\int\nolimits_{V_{h}^{\mathcal{N}}(\mathcal{P})}}
%EndExpansion
f\left(  \left\Vert q-p_{h}\right\Vert \right)  \phi\left(  q\right)
dq\right) \nonumber\\
&  =\lim_{\varepsilon\rightarrow0}\frac{1}{\left\Vert \varepsilon\right\Vert }%
%TCIMACRO{\dint \nolimits_{V_{h}^{\mathcal{N}}(\left\{  p_{1},\ldots
%,p_{h}+\varepsilon,\ldots,p_{m}\right\}  )}}%
%BeginExpansion
{\displaystyle\int\nolimits_{V_{h}^{\mathcal{N}}(\left\{  p_{1},\ldots
,p_{h}+\varepsilon,\ldots,p_{m}\right\}  )}}
%EndExpansion
\left(  f\left(  \left\Vert q-\left(  p_{h}+\varepsilon\right)  \right\Vert
\right)  -f\left(  \left\Vert q-p_{h}\right\Vert \right)  \right)  \phi\left(
q\right)  dq\nonumber\\
&  +\lim_{\varepsilon\rightarrow0}\frac{1}{\left\Vert \varepsilon\right\Vert
}\left(
%TCIMACRO{\dint \nolimits_{V_{h}^{\mathcal{N}}(\left\{  p_{1},\ldots
%,p_{h}+\varepsilon,\ldots,p_{m}\right\}  )}}%
%BeginExpansion
{\displaystyle\int\nolimits_{V_{h}^{\mathcal{N}}(\left\{  p_{1},\ldots
,p_{h}+\varepsilon,\ldots,p_{m}\right\}  )}}
%EndExpansion
f\left(  \left\Vert q-p_{h}\right\Vert \right)  \phi\left(  q\right)  dq-%
%TCIMACRO{\dint \nolimits_{V_{h}^{\mathcal{N}}(\mathcal{P})}}%
%BeginExpansion
{\displaystyle\int\nolimits_{V_{h}^{\mathcal{N}}(\mathcal{P})}}
%EndExpansion
f\left(  \left\Vert q-p_{h}\right\Vert \right)  \phi\left(  q\right)
dq\right) \nonumber
\end{align}%
\begin{align}
&  =\lim_{\varepsilon\rightarrow0}\frac{1}{\left\Vert \varepsilon\right\Vert }%
%TCIMACRO{\dint \nolimits_{V_{h}^{\mathcal{N}}(\mathcal{P})}}%
%BeginExpansion
{\displaystyle\int\nolimits_{V_{h}^{\mathcal{N}}(\mathcal{P})}}
%EndExpansion
\left(  f\left(  \left\Vert q-\left(  p_{h}+\varepsilon\right)  \right\Vert
\right)  -f\left(  \left\Vert q-p_{h}\right\Vert \right)  \right)  \phi\left(
q\right)  dq\nonumber\\
&  +\lim_{\varepsilon\rightarrow0}\frac{1}{\left\Vert \varepsilon\right\Vert }%
%TCIMACRO{\dint \nolimits_{\Delta V_{hh}^{\mathcal{N}}(\mathcal{P})^{+}}}%
%BeginExpansion
{\displaystyle\int\nolimits_{\Delta V_{hh}^{\mathcal{N}}(\mathcal{P})^{+}}}
%EndExpansion
\left(  f\left(  \left\Vert q-\left(  p_{h}+\varepsilon\right)  \right\Vert
\right)  -f\left(  \left\Vert q-p_{h}\right\Vert \right)  \right)  \phi\left(
q\right)  dq\nonumber
\end{align}%
\begin{align}
&  -\lim_{\varepsilon\rightarrow0}\frac{1}{\left\Vert \varepsilon\right\Vert }%
%TCIMACRO{\dint \nolimits_{\Delta V_{hh}^{\mathcal{N}}(\mathcal{P})^{-}}}%
%BeginExpansion
{\displaystyle\int\nolimits_{\Delta V_{hh}^{\mathcal{N}}(\mathcal{P})^{-}}}
%EndExpansion
\left(  f\left(  \left\Vert q-\left(  p_{h}+\varepsilon\right)  \right\Vert
\right)  -f\left(  \left\Vert q-p_{h}\right\Vert \right)  \right)  \phi\left(
q\right)  dq\nonumber\\
&  +\lim_{\varepsilon\rightarrow0}\frac{1}{\left\Vert \varepsilon\right\Vert
}\left(
%TCIMACRO{\dint \nolimits_{\Delta V_{hh}^{\mathcal{N}}(\mathcal{P})^{+}}}%
%BeginExpansion
{\displaystyle\int\nolimits_{\Delta V_{hh}^{\mathcal{N}}(\mathcal{P})^{+}}}
%EndExpansion
f\left(  \left\Vert q-p_{h}\right\Vert \right)  \phi\left(  q\right)  dq-%
%TCIMACRO{\dint \nolimits_{\Delta V_{hh}^{\mathcal{N}}(\mathcal{P})^{-}}}%
%BeginExpansion
{\displaystyle\int\nolimits_{\Delta V_{hh}^{\mathcal{N}}(\mathcal{P})^{-}}}
%EndExpansion
f\left(  \left\Vert q-p_{h}\right\Vert \right)  \phi\left(  q\right)
dq\right)  \text{,} \label{eq:firstterm_full}%
\end{align}
where $\Delta V_{hh}^{\mathcal{N}}(\mathcal{P})^{+}=V_{h}^{\mathcal{N}%
}(\left\{  p_{1},\ldots,p_{h}+\varepsilon,\ldots,p_{m}\right\}  )\setminus
V_{h}^{\mathcal{N}}(\mathcal{P})$ and \linebreak$\Delta V_{hh}^{\mathcal{N}%
}(\mathcal{P})^{-}=V_{h}^{\mathcal{N}}(\mathcal{P})\setminus V_{h}%
^{\mathcal{N}}(\left\{  p_{1},\ldots,p_{h}+\varepsilon,\ldots,p_{m}\right\}
)$. Now we want to prove that the sum of the second term of
(\ref{eq:grad_full}) and the last term of (\ref{eq:firstterm_full}) is null.
First of all, recall that the sum in the second term of (\ref{eq:grad_full})
can be limited to the cells in the neighborhood of the $h$-th cell, namely
$\forall i\in I_{h}$ with $I_{h}=\left\{  j\in\{1,\ldots,m\}\mid p_{j}%
\in\mathcal{N}_{\mathcal{G}_{D}}(p_{h},\mathcal{P})\right\}  $. This fact
implies that $%
%TCIMACRO{\dbigcup \nolimits_{i\in I_{h}}}%
%BeginExpansion
{\displaystyle\bigcup\nolimits_{i\in I_{h}}}
%EndExpansion
\left(  \Delta V_{ih}^{\mathcal{N}}(\mathcal{P})^{+}\cup\Delta V_{ih}%
^{\mathcal{N}}(\mathcal{P})^{-}\right)  =\Delta V_{hh}^{\mathcal{N}%
}(\mathcal{P})^{+}\cup\Delta V_{hh}^{\mathcal{N}}(\mathcal{P})^{-}$. Moreover
it can be easily seen that $\Delta V_{ih}^{\mathcal{N}}(\mathcal{P}%
)^{+}\subset\Delta V_{hh}^{\mathcal{N}}(\mathcal{P})^{-}$ and $\Delta
V_{ih}^{\mathcal{N}}(\mathcal{P})^{-}\subset\Delta V_{hh}^{\mathcal{N}%
}(\mathcal{P})^{+}$ $\forall i\in I_{h}$. Indeed, any segment $s\in\Delta
V_{ih}^{\mathcal{N}}(\mathcal{P})^{+}$ is such that $s\in V_{i}^{\mathcal{N}%
}(\left\{  p_{1},\ldots,p_{h}+\varepsilon,\ldots,p_{m}\right\}  )$ and
$s\notin V_{i}^{\mathcal{N}}(\mathcal{P})$, and, for any infinitesimal
perturbation of $p_{h}$, it is possible only if $s\in V_{h}^{\mathcal{N}%
}(\mathcal{P})$ and $s\notin V_{h}^{\mathcal{N}}(\left\{  p_{1},\ldots
,p_{h}+\varepsilon,\ldots,p_{m}\right\}  )$, hence if $s\in\Delta
V_{hh}^{\mathcal{N}}(\mathcal{P})^{-}$. Therefore we have%
\begin{align*}
&  \frac{\partial}{\partial p_{h}}%
%TCIMACRO{\dsum \limits_{i\in I_{h}}}%
%BeginExpansion
{\displaystyle\sum\limits_{i\in I_{h}}}
%EndExpansion%
%TCIMACRO{\dint \nolimits_{V_{i}^{\mathcal{N}}(\mathcal{P})}}%
%BeginExpansion
{\displaystyle\int\nolimits_{V_{i}^{\mathcal{N}}(\mathcal{P})}}
%EndExpansion
f\left(  \left\Vert q-p_{i}\right\Vert \right)  \phi\left(  q\right)  dq\\
&  =\lim_{\varepsilon\rightarrow0}\frac{1}{\left\Vert \varepsilon\right\Vert }%
%TCIMACRO{\dsum \limits_{i\in I_{h}}}%
%BeginExpansion
{\displaystyle\sum\limits_{i\in I_{h}}}
%EndExpansion
\left(
%TCIMACRO{\dint \nolimits_{\Delta V_{ih}^{\mathcal{N}}(\mathcal{P})^{+}}}%
%BeginExpansion
{\displaystyle\int\nolimits_{\Delta V_{ih}^{\mathcal{N}}(\mathcal{P})^{+}}}
%EndExpansion
f\left(  \left\Vert q-p_{i}\right\Vert \right)  \phi\left(  q\right)  dq-%
%TCIMACRO{\dint \nolimits_{\Delta V_{ih}^{\mathcal{N}}(\mathcal{P})^{-}}}%
%BeginExpansion
{\displaystyle\int\nolimits_{\Delta V_{ih}^{\mathcal{N}}(\mathcal{P})^{-}}}
%EndExpansion
f\left(  \left\Vert q-p_{i}\right\Vert \right)  \phi\left(  q\right)
dq\right) \\
&  =\lim_{\varepsilon\rightarrow0}\frac{1}{\left\Vert \varepsilon\right\Vert
}\left(
%TCIMACRO{\dint \nolimits_{\bigcup\nolimits_{i\in I_{h}}\Delta V_{ih}%
%^{\mathcal{N}}(\mathcal{P})^{+}}}%
%BeginExpansion
{\displaystyle\int\nolimits_{\bigcup\nolimits_{i\in I_{h}}\Delta
V_{ih}^{\mathcal{N}}(\mathcal{P})^{+}}}
%EndExpansion
f\left(  \left\Vert q-p_{i}\right\Vert \right)  \phi\left(  q\right)  dq-%
%TCIMACRO{\dint \nolimits_{\bigcup\nolimits_{i\in I_{h}}\Delta V_{ih}%
%^{\mathcal{N}}(\mathcal{P})^{-}}}%
%BeginExpansion
{\displaystyle\int\nolimits_{\bigcup\nolimits_{i\in I_{h}}\Delta
V_{ih}^{\mathcal{N}}(\mathcal{P})^{-}}}
%EndExpansion
f\left(  \left\Vert q-p_{i}\right\Vert \right)  \phi\left(  q\right)
dq\right) \\
&  =\lim_{\varepsilon\rightarrow0}\frac{1}{\left\Vert \varepsilon\right\Vert
}\left(
%TCIMACRO{\dint \nolimits_{\Delta V_{hh}^{\mathcal{N}}(\mathcal{P})^{-}}}%
%BeginExpansion
{\displaystyle\int\nolimits_{\Delta V_{hh}^{\mathcal{N}}(\mathcal{P})^{-}}}
%EndExpansion
f\left(  \left\Vert q-p_{i}\right\Vert \right)  \phi\left(  q\right)  dq-%
%TCIMACRO{\dint \nolimits_{\Delta V_{hh}^{\mathcal{N}}(\mathcal{P})^{+}}}%
%BeginExpansion
{\displaystyle\int\nolimits_{\Delta V_{hh}^{\mathcal{N}}(\mathcal{P})^{+}}}
%EndExpansion
f\left(  \left\Vert q-p_{i}\right\Vert \right)  \phi\left(  q\right)
dq\right)  \text{.}%
\end{align*}
The conclusion follows from the fact that $\lim_{\varepsilon\rightarrow
0}\left(  \Delta V_{hh}^{\mathcal{N}}(\mathcal{P})^{+}\cup\Delta
V_{hh}^{\mathcal{N}}(\mathcal{P})^{-}\right)  =\partial V_{h}^{\mathcal{N}%
}(\mathcal{P})$ and $\forall q\in\partial V_{h}^{\mathcal{N}}(\mathcal{P})$
$\left\Vert q-p_{i}\right\Vert =\left\Vert q-p_{h}\right\Vert $. As concerns
the second-last term of (\ref{eq:firstterm_full}), recalling again that
$\lim_{\varepsilon\rightarrow0}\left(  \Delta V_{hh}^{\mathcal{N}}%
(\mathcal{P})^{+}\cup\Delta V_{hh}^{\mathcal{N}}(\mathcal{P})^{-}\right)
=\partial V_{h}^{\mathcal{N}}(\mathcal{P})$ and the Assumptions
\ref{Ass_fullnet}, we can write%
\begin{align*}
&  \lim_{\varepsilon\rightarrow0}\frac{1}{\left\Vert \varepsilon\right\Vert }%
%TCIMACRO{\dint \nolimits_{\Delta V_{hh}^{\mathcal{N}}(\mathcal{P})^{+}}}%
%BeginExpansion
{\displaystyle\int\nolimits_{\Delta V_{hh}^{\mathcal{N}}(\mathcal{P})^{+}}}
%EndExpansion
\left(  f\left(  \left\Vert q-\left(  p_{h}+\varepsilon\right)  \right\Vert
\right)  -f\left(  \left\Vert q-p_{h}\right\Vert \right)  \right)  \phi\left(
q\right)  dq\\
&  \leq\lim_{\varepsilon\rightarrow0}\frac{1}{\left\Vert \varepsilon
\right\Vert }%
%TCIMACRO{\dint \nolimits_{\Delta V_{hh}^{\mathcal{N}}(\mathcal{P})^{+}}}%
%BeginExpansion
{\displaystyle\int\nolimits_{\Delta V_{hh}^{\mathcal{N}}(\mathcal{P})^{+}}}
%EndExpansion
\left\Vert \frac{\partial f}{\partial x}\right\Vert _{\left[  0,\mathrm{diam}%
(\mathcal{N})\right]  }\left\Vert \varepsilon\right\Vert \left\Vert
\phi\right\Vert _{\left[  0,\mathrm{diam}(\mathcal{N})\right]  }dq\\
&  =\lim_{\varepsilon\rightarrow0}\left\Vert \frac{\partial f}{\partial
x}\right\Vert _{\left[  0,\mathrm{diam}(\mathcal{N})\right]  }\left\Vert
\phi\right\Vert _{\left[  0,\mathrm{diam}(\mathcal{N})\right]  }\mu\left(
\Delta V_{hh}^{\mathcal{N}}(\mathcal{P})^{+}\right)  =0\text{,}%
\end{align*}
where $\mathrm{diam}(\mathcal{N})\triangleq\max_{p,q\in\mathcal{N}}\left\Vert
q-p\right\Vert $. The same argument holds for the term with $\Delta
V_{hh}^{\mathcal{N}}(\mathcal{P})^{-}$, hence we have%
\begin{align*}
\frac{\partial\mathcal{H}}{\partial p_{h}}(\mathcal{P})  &  =\lim
_{\varepsilon\rightarrow0}\frac{1}{\left\Vert \varepsilon\right\Vert }%
%TCIMACRO{\dint \nolimits_{V_{h}^{\mathcal{N}}(\mathcal{P})}}%
%BeginExpansion
{\displaystyle\int\nolimits_{V_{h}^{\mathcal{N}}(\mathcal{P})}}
%EndExpansion
\left(  f\left(  \left\Vert q-\left(  p_{h}+\varepsilon\right)  \right\Vert
\right)  -f\left(  \left\Vert q-p_{h}\right\Vert \right)  \right)  \phi\left(
q\right)  dq\\
&  =%
%TCIMACRO{\dint \nolimits_{V_{h}^{\mathcal{N}}(\mathcal{P})}}%
%BeginExpansion
{\displaystyle\int\nolimits_{V_{h}^{\mathcal{N}}(\mathcal{P})}}
%EndExpansion
\frac{\partial}{\partial p_{h}}f\left(  \left\Vert q-p_{h}\right\Vert \right)
\phi\left(  q\right)  dq\\
&  =%
%TCIMACRO{\dsum \limits_{k=1}^{M_{h}(\mathcal{P})}}%
%BeginExpansion
{\displaystyle\sum\limits_{k=1}^{M_{h}(\mathcal{P})}}
%EndExpansion
\mathcal{I}_{k}\text{,}%
\end{align*}
with $M_{h}(\mathcal{P})$ the number of segments in $V_{h}^{\mathcal{N}%
}(\mathcal{P})$ and $\mathcal{I}_{k}=%
%TCIMACRO{\dint \nolimits_{s_{k}}}%
%BeginExpansion
{\displaystyle\int\nolimits_{s_{k}}}
%EndExpansion
\frac{\partial}{\partial p_{h}}f\left(  \left\Vert q-p_{h}\right\Vert \right)
\phi\left(  q\right)  dq$ and $s_{k}=[a_{k},b_{k}]\in V_{h}^{\mathcal{N}%
}(\mathcal{P})$. If we choose the parameterization $\gamma_{k}(t)=a_{k}%
+\left(  b_{k}-a_{k}\right)  t,$ $t\in\lbrack0,1]$ for $s_{k}$ we can apply
the Theorem \ref{Th:derseg} in appendix. Recall that in this case $\nu\left(
x,q\right)  =\left\Vert q-x\right\Vert $, hence the equation $\left\Vert
\gamma_{k}(t)-p_{h}\right\Vert -R_{\alpha}=0$ may have at most two zeros at
$t_{\alpha,1}^{k}$ and $t_{\alpha,2}^{k}$ $\forall\alpha\in\{1,\ldots,N\}$. It
is worth noting that assumptions \ref{Ass_fullnet} iii) and iv) play here the
same role of assumptions i) and ii) in Theorem \ref{Th:derseg}. Therefore,
from the definition of $f(\cdot)$, equation (\ref{eq:def_f}), we have the thesis.
\end{proof}

In order to define a gradient-like algorithm, also in this case, we must relax
Assumptions \ref{Ass_fullnet}. First of all, focus on the orthogonality
assumption. It has been introduced to avoid the presence of entire segments in
the boundary of a cell, because these configurations induce problems in the
definition of the gradient (they represent points on which the gradient may
assume different values). Even in this case we opt to use the lexicographic
rule in order to univocally assign a segment on the boundary to only one cell,
and, again, we consider a discrete-time dynamics for the gradient-like algorithm.

Using the lexicographic rule, we re-define the Voronoi cell as follows%
\begin{multline}
V_{i}^{\mathcal{N}}(\mathcal{P})=\{q\in\mathcal{N}\mid\left\Vert
q-p_{i}\right\Vert \leq\left\Vert q-p_{j}\right\Vert ~\forall p_{j}%
\in\mathcal{P}~\wedge\text{~}\nonumber\\
\left\Vert q-p_{i}\right\Vert <\left\Vert q-p_{j}\right\Vert \text{ if
}j<i\text{ w.r.t. the L.O.}\mathcal{\}}%
\end{multline}
and verify that the expression (\ref{eq:defH_Vor_red}) for $\mathcal{H}%
(\mathcal{P})$\ is still formally correct. We remove the orthogonality
hypothesis by adding to (\ref{eq:def_direcderNET_on_s}) an $\mathcal{I}_{k}$
term for each segment entirely included in the boundary of a Voronoi cell.
This fact does not change the expression (\ref{eq:def_direcderNET_on_s}),
since, with the new definition $V_{h}^{\mathcal{N}}(\mathcal{P})$,
$M_{h}(\mathcal{P})$ accounts now also for segments on the boundary.

The relaxation of the other assumptions would imply some discontinuities in
the integration domain induced by the discontinuities of the function $f$.
These discontinuities, without additional assumptions, would prevent us from
guaranteeing $\mathcal{H(P)}$ to be monotonically nondecreasing along the
evolution of $\mathcal{P}$ given by the gradient dynamics. Hence, we assume
now $f$ to be continuous and piecewise differentiable. Being $f$ continuous,
the second term in $\mathcal{I}_{k}$ in (\ref{eq:def_direcderNET_on_s}) is null.

As made in the previous section, the directional derivative must be univocally
defined on the vertices. To this aim, we use the expression
(\ref{eq:def_direcder_complete}) given in Definition
\ref{def:direcder_complete}, but with reference to the formula
(\ref{eq:def_direcderNET_on_s}) for the directional derivative in a point in
the interior of a segment. Using these definitions we can state the following theorem.

\begin{theorem}
Consider the following discrete-time evolution for the sensors' positions%
\begin{equation}
\mathcal{P}^{(k+1)}=\mathcal{P}^{(k)}+\delta_{k}\widetilde{D}\mathcal{H}%
(\mathcal{P}^{(k)})\text{,}%
\end{equation}
where the $h$-th component of $\widetilde{D}\mathcal{H}$ is given by
(\ref{eq:def_direcder_complete}) and $D_{w_{ij}}\mathcal{H(P)}[p_{h}]$ by
(\ref{eq:def_direcderNET_on_s}) and $\mathcal{H}:\mathcal{N}^{m}%
\rightarrow\mathbb{R}$ as in (\ref{eq:defH_Vor_red}). If $f(\cdot)$ has
locally bounded second derivatives, then, for suitable $\delta_{k}$,
$\mathcal{P}^{(k)}$ lies in a bounded set and

\begin{description}
\item[i)] $\mathcal{H}(\mathcal{P}^{(k)})$ is monotonically nondecreasing;

\item[ii)] $\mathcal{P}^{(k)}$ converges to the set of critical points of
$\mathcal{H}$.
\end{description}
\end{theorem}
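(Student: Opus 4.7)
The strategy follows the pattern of Theorems \ref{Th:discrevol_colR2} and \ref{Th:discrevol_colN}, adapted to the case where the domain of integration is a continuum rather than a finite set of barycenters. Boundedness of $\mathcal{P}^{(k)}$ is immediate, since by construction each sensor moves along an edge of $\mathcal{N}$ and the choice rule in Definition \ref{def:direcder_complete} sets the derivative to zero whenever every admissible direction would leave $\mathcal{N}$; hence the iterates remain inside the compact set $\mathcal{N}^m$ for any admissible step-size.

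To prove (i), I would mimic the decomposition used in the proof of Theorem \ref{Th:discrevol_colR2}. Write $\mathcal{H}_{p_i}(\mathcal{P}) = \int_{V_i^{\mathcal{N}}(\mathcal{P})} f(\|q-p_i\|)\phi(q)\,dq$ so that $\mathcal{H}(\mathcal{P}) = \sum_i \mathcal{H}_{p_i}(\mathcal{P})$, and introduce the auxiliary quantity $\mathcal{H}_{p_i}^u(\mathcal{P}^{(k+1)}) = \int_{V_i^{\mathcal{N}}(\mathcal{P}^{(k)})} f(\|q-p_i^{(k+1)}\|)\phi(q)\,dq$ obtained by updating sensor positions but freezing the Voronoi partition. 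Since the integrand depends smoothly on $p_i^{(k+1)}$ on the frozen domain, and since $f$ has locally bounded second derivatives, a Taylor expansion along the admissible direction $w_{hl}$ used to define $\widetilde{D}\mathcal{H}(\mathcal{P}^{(k)})_i$ yields
\[
\mathcal{H}_{p_i}^u(\mathcal{P}^{(k+1)}) = \mathcal{H}_{p_i}(\mathcal{P}^{(k)}) + \delta_k^i \left(\frac{\partial \mathcal{H}_{p_i}(\mathcal{P}^{(k)})}{\partial p_i}\cdot w_{hl}\right)^{2} + o((\delta_k^i)^2),
\]
exactly as in Theorem \ref{Th:discrevol_colN}, using the expression for $\partial\mathcal{H}/\partial p_h$ given by the preceding theorem (with the $f_{\alpha+1}(R_\alpha)-f_\alpha(R_\alpha)$ contribution vanishing because we now take $f$ continuous).

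The key obstacle, and the place where the proof genuinely differs from the collapsed case, is that reallocation of mass between cells now involves portions of segments of positive length rather than isolated barycenters. I would argue as follows: for any sub-segment (or point) $q$ that migrates from $V_j^{\mathcal{N}}(\mathcal{P}^{(k)})$ to $V_i^{\mathcal{N}}(\mathcal{P}^{(k+1)})$, the defining inequality (sharpened by the lexicographic tie-breaking) gives $\|q-p_i^{(k+1)}\| \le \|q-p_j^{(k+1)}\|$, so that monotonicity of $f$ yields $f(\|q-p_i^{(k+1)}\|)\ge f(\|q-p_j^{(k+1)}\|)$ pointwise. Integrating this pointwise inequality against $\phi$ over the reallocated sets $\Delta V_{ih}^{\mathcal{N}}(\mathcal{P})^{\pm}$ (with the same telescoping identity used in the preceding theorem, namely $\bigcup_i \Delta V_{ih}^{\mathcal{N}}(\mathcal{P})^{+} = \Delta V_{hh}^{\mathcal{N}}(\mathcal{P})^{-}$ and vice versa) shows that the reallocation contribution is nonnegative, so $\sum_i \mathcal{H}_{p_i}(\mathcal{P}^{(k+1)}) \ge \sum_i \mathcal{H}_{p_i}^u(\mathcal{P}^{(k+1)})$.

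Combining these estimates, there exist $c_k,\delta_k>0$ with
\[
\mathcal{H}(\mathcal{P}^{(k+1)}) \ge \mathcal{H}(\mathcal{P}^{(k)}) + \delta_k c_k + o(\delta_k^2),
\]
which for $\delta_k$ sufficiently small gives (i) and the strict inequality whenever $\widetilde{D}\mathcal{H}(\mathcal{P}^{(k)})\neq 0$. For (ii), the local boundedness of the second derivatives of $f$ together with the boundedness of $\mathcal{P}^{(k)}$ supplies a uniform lower bound $\bar{\delta}>0$ on the admissible step-sizes; then Proposition 3.4 of \cite{CortesESAIM05} applies verbatim, yielding convergence of $\mathcal{P}^{(k)}$ to the set of critical points of $\mathcal{H}$. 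The main delicate point remains verifying that the pointwise monotonicity argument for reallocated segments is measure-theoretically valid at the finitely many boundary events and vertex crossings; this is handled by observing that such events occur on a set of zero one-dimensional measure along $\mathcal{N}$, hence do not affect the integral inequalities above.
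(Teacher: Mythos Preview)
Your proof is correct and follows essentially the same route as the paper: freeze the Voronoi partition, Taylor-expand along the admissible direction exactly as in Theorem~\ref{Th:discrevol_colN}, show that reallocation is nonnegative by integrating the pointwise inequality $f(\|q-p_i^{(k+1)}\|)\ge f(\|q-p_j^{(k+1)}\|)$ over the migrating portions of segments, and then invoke Proposition~3.4 of \cite{CortesESAIM05} for (ii). The paper organizes the argument slightly differently---it treats configurations satisfying the orthogonality assumption as the ``canonical'' smooth case and applies the reallocation device only to entire segments lying on a Voronoi edge---but the mechanism is identical, and your uniform treatment of all reallocated mass is if anything a bit cleaner (the telescoping identity you mention is not actually needed: $\sum_i \mathcal{H}_{p_i}(\mathcal{P}^{(k+1)})\ge \sum_i \mathcal{H}_{p_i}^{u}(\mathcal{P}^{(k+1)})$ follows directly from the $\max$ formulation of $\mathcal{H}$).
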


\begin{proof}
As long as sensors' configurations not violating orthogonality assumption are
considered, the gradient is smooth and the proof is canonical. In the case of
discontinuity points, segments belonging to the boundary of a cell can change
allocation during sensors' motion. Hence, we can proceed as in the proof of
Theorem \ref{Th:discrevol_colR2} and \ref{Th:discrevol_colN} replacing
barycenters with segments. In particular, being $s_{h}$ a segment changing
allocation, equations (\ref{eq:Hu_contrib}) are now replaced by%
\begin{align*}
\mathcal{H}_{p_{i}}(\mathcal{P}^{(k+1)})  &  =\mathcal{H}_{p_{i}}%
^{u}(\mathcal{P}^{(k+1)})+\mathcal{I}_{h}^{i}\\
\mathcal{H}_{p_{j}}(\mathcal{P}^{(k+1)})  &  =\mathcal{H}_{p_{j}}%
^{u}(\mathcal{P}^{(k+1)})-\mathcal{I}_{h}^{j}\text{,}%
\end{align*}
with $\mathcal{I}_{k}^{l}=%
%TCIMACRO{\dint \nolimits_{s_{h}}}%
%BeginExpansion
{\displaystyle\int\nolimits_{s_{h}}}
%EndExpansion
\frac{\partial}{\partial p_{l}}f\left(  \left\Vert q-p_{l}\right\Vert \right)
\phi\left(  q\right)  dq$. Again, due to the fact that any point $q\in s_{h}$
is such that $\left\Vert q-p_{i}^{(k+1)}\right\Vert \leq\left\Vert
q-p_{j}^{(k+1)}\right\Vert $\ and the monotonicity of $f$, we have
$\mathcal{I}_{h}^{i}\geq\mathcal{I}_{h}^{j}$, whereby the thesis follows as in
the proof of Theorem \ref{Th:discrevol_colR2}.
\end{proof}

\section{A case study\label{sec:simul}}

In this section we apply the proposed two-step optimization procedure to a
network representing a wing of the Amsterdam Schiphol airport. The network is
made up of $63$ vertices and $87$ segments and the density function $\phi$ is
the sum of $11$ Gaussian functions of the form $G(a,c_{x},c_{y},\sigma
_{x},\sigma_{y})=a\exp\left(  -\left(  \frac{x-c_{x}}{\sigma_{x}}\right)
^{2}-\left(  \frac{y-c_{y}}{\sigma_{y}}\right)  ^{2}\right)  $ with parameters
assuming the values given in table~\ref{tab:gauss}.

\begin{table}[ptb]
\centering%
\begin{tabular}
[c]{c|ccccccccccc}%
$a$ & $20$ & $20$ & $20$ & $10$ & $4$ & $20$ & $20$ & $20$ & $10$ & $10$ &
$4$\\
$c_{x}$ & $4.3$ & $5$ & $6$ & $3.5$ & $9$ & $12.5$ & $13.5$ & $15$ & $14.5$ &
$17$ & $20$\\
$c_{y}$ & $2.3$ & $4$ & $5.5$ & $5$ & $8.5$ & $8.5$ & $7.2$ & $6.2$ & $10.5$ &
$9$ & $7$\\
$\sigma_{x}$ & $1.5$ & $1.5$ & $1.5$ & $2$ & $4$ & $1.5$ & $1.5$ & $1.5$ & $2$
& $2$ & $4$\\
$\sigma_{y}$ & $1.5$ & $1.5$ & $1.5$ & $2$ & $4$ & $1.5$ & $1.5$ & $1.5$ & $2$
& $2$ & $2$\\\hline
\end{tabular}
\caption{Parameter values for the $11$ Gaussian functions $G(a,c_{x}%
,c_{y},\sigma_{x},\sigma_{y})=a\exp\left(  -\left(  \frac{x-c_{x}}{\sigma_{x}%
}\right)  ^{2}-\left(  \frac{y-c_{y}}{\sigma_{y}}\right)  ^{2}\right)  $
making up the density function $\phi$.}%
\label{tab:gauss}%
\end{table}The network and a contour plot of the density function are shown in
fig.~\ref{fig:SchipholNet} (darker colors denote preferential areas). For sake
of providing a clear graphical representation, the density function shown here
is defined on $\mathbb{R}^{2}$, but the one used in all simulations is
restricted to the network.\begin{figure}[h]
\begin{center}
\includegraphics[width=0.8\columnwidth]{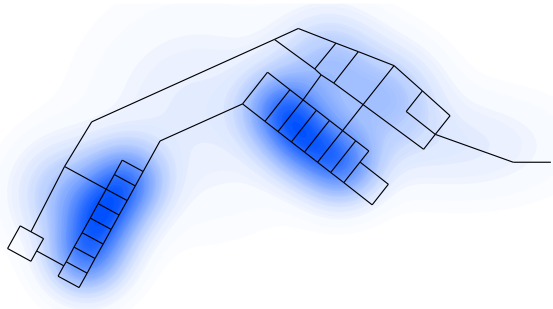}
\end{center}
\caption{Network representing a wing of the Amsterdam Schiphol airport and
contour plot of the density function $\phi$ used in simulations (darker colors
denote preferential areas).}%
\label{fig:SchipholNet}%
\end{figure}

The $50$ sensors to be deployed have performance function $f(x)=\frac{1}%
{2}\left(  1-\tanh\left(  \frac{x-\frac{R}{2}}{\frac{R}{6}}\right)  \right)
$, where $R$ is a parameter considered as variable in the first step and as
fixed in the second step of the optimization. Even if the previous function
describes sensors with an infinite sensing radius, they will be represented as
shaded circles of radius $\frac{7}{8}R$ to emphasize that the performance
function assumes values lesser than $0.01$ for larger distances.

\subsection{First step}

The first step of the optimization is performed on a collapsed network with
collapsing factor $r=0.3$ (see the small dots along the grey network in
fig.~\ref{fig:SchipholCP}-a,-c)). Sensors are grouped in $10$ clusters of $5$
elements each, and each cluster is represented as a single sensor. Clusters
set initially $R=10$ and decrease linearly its value up to $1$ during the
simulation (compare fig.~\ref{fig:SchipholCP}-a) with
fig.~\ref{fig:SchipholCP}-c)). As apparent by the flows in
fig.~\ref{fig:SchipholCP}-b), clusters are allowed to move in $\mathbb{R}^{2}$.

\begin{figure}[h]
\begin{center}%
\begin{tabular}
[c]{c}%
\begin{tabular}
[c]{c}%
\includegraphics[width=0.8\columnwidth]{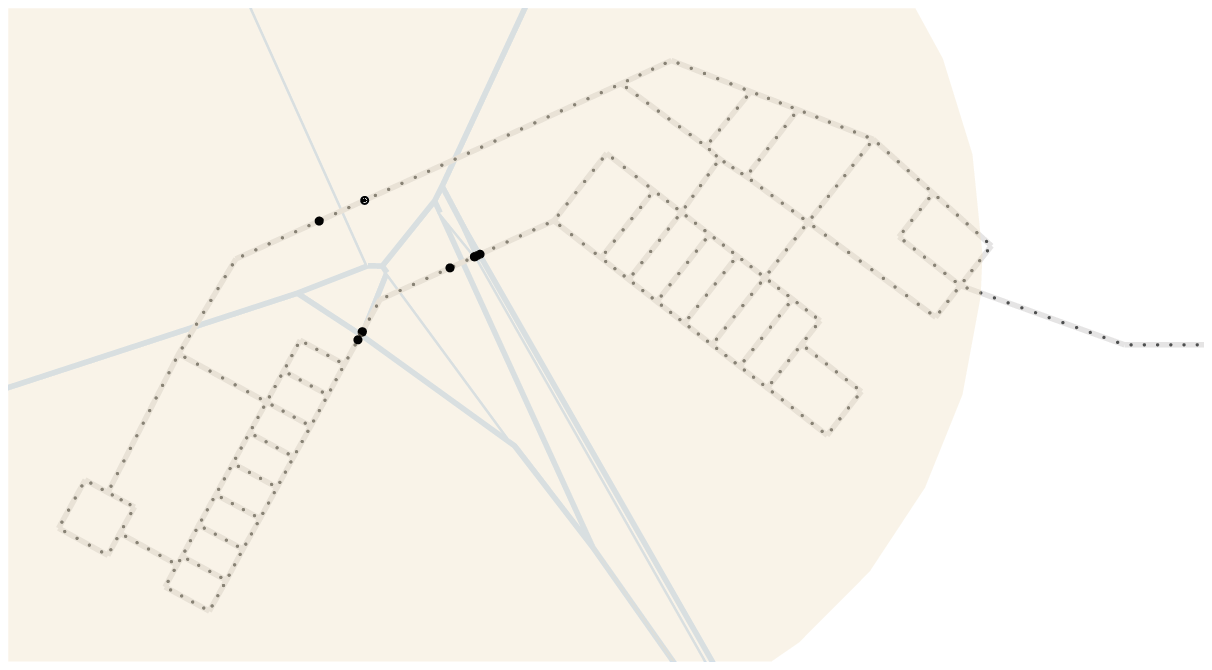}\\
a)
\end{tabular}
\\%
\begin{tabular}
[c]{c}%
\includegraphics[width=0.8\columnwidth]{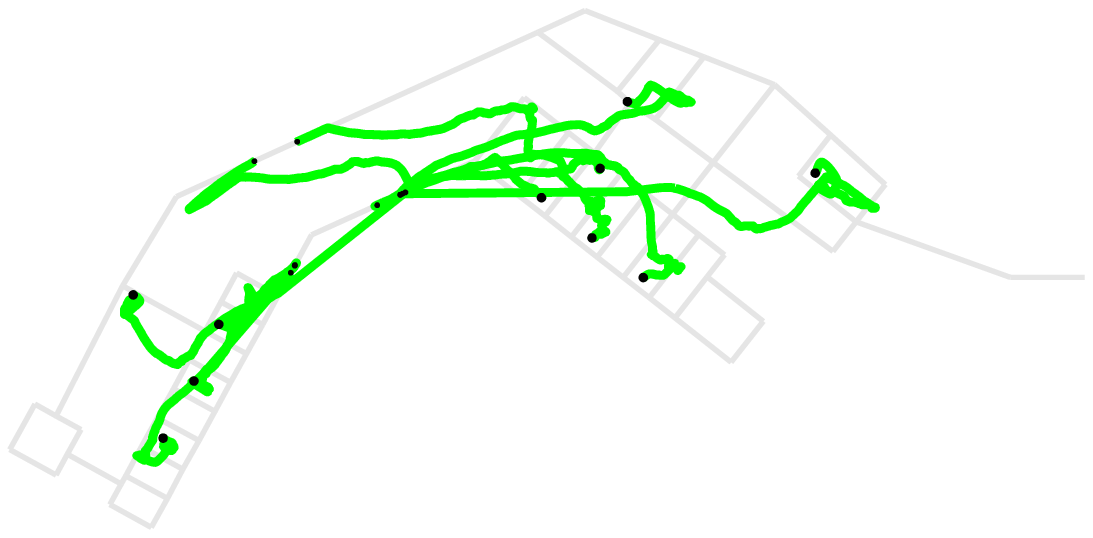}\\
b)
\end{tabular}
\\%
\begin{tabular}
[c]{c}%
\includegraphics[width=0.8\columnwidth]{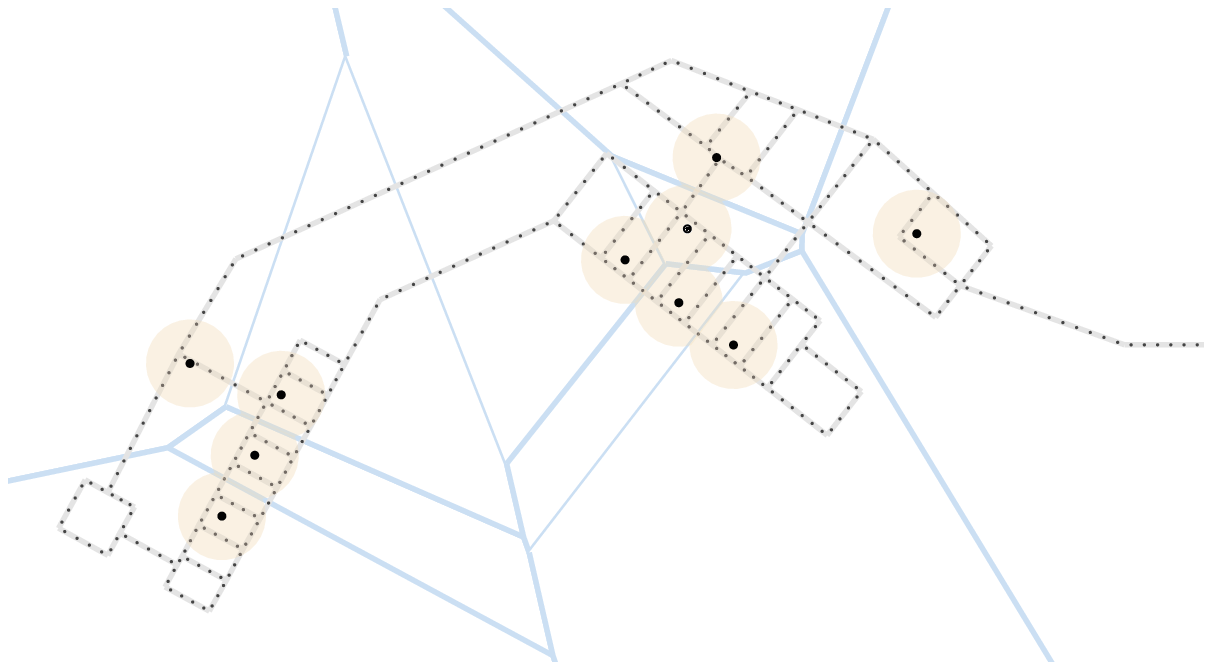}\\
c)
\end{tabular}
\end{tabular}
\end{center}
\caption{First step of the optimization procedure for the deployment over a
collapsed network of $50$ sensors grouped in $10$ clusters: a) initial
positions of clusters (sensing radius $R=10$) and related Voronoi partition;
b) gradient ascent flows (clusters allowed to move in $\mathbb{R}^{2}$); c)
final positions, not necessarily belonging to the network, (sensing radius
$R=1$) and related Voronoi partition. }%
\label{fig:SchipholCP}%
\end{figure}It is important to recall that, both the variation of the sensing
radius and the unconstrained motion of sensors are allowed in the first step
as it is performed off-line. This step makes use of the algorithm described in
section~\ref{sec:SensinR2} and is thought to provide a good starting point for
the second step. However, if sensors are initially located on the network as
in fig.~\ref{fig:SchipholCP}-a), they can execute the first step
independently, using partial or rough information of the environment, without
moving, and then plan a route on the network to reach the previously computed
final positions. Since final positions can be not on the network (see
fig.~\ref{fig:SchipholCP}-c)), they must be projected on it to be reachable.
Anyway, this projection has to be performed before the second step to provide
a valid starting point.

\subsection{Second step}

The second step considers a full network with sensors having fixed radius
$R=1$ and initially deployed in the positions shown in
fig.~\ref{fig:SchipholNN}-a). Such positions are obtained by spreading
randomly $5$ sensors close to each cluster center and projecting them on the
closest segments of the network. Sensors now can take real measures from the
environment and perform the optimization on-line, while moving, according to
the algorithm described in section~\ref{sec:FullNet}. They are constrained to
move on the network as shown in fig.~\ref{fig:SchipholNN}-b). Final positions
(see fig.~\ref{fig:SchipholNN}-c)) show how sensors, originally clustered,
diffused to better cover preferential areas (see fig~\ref{fig:SchipholNet}).
\begin{figure}[h]
\begin{center}%
\begin{tabular}
[c]{c}%
\begin{tabular}
[c]{c}%
\includegraphics[width=0.8\columnwidth]{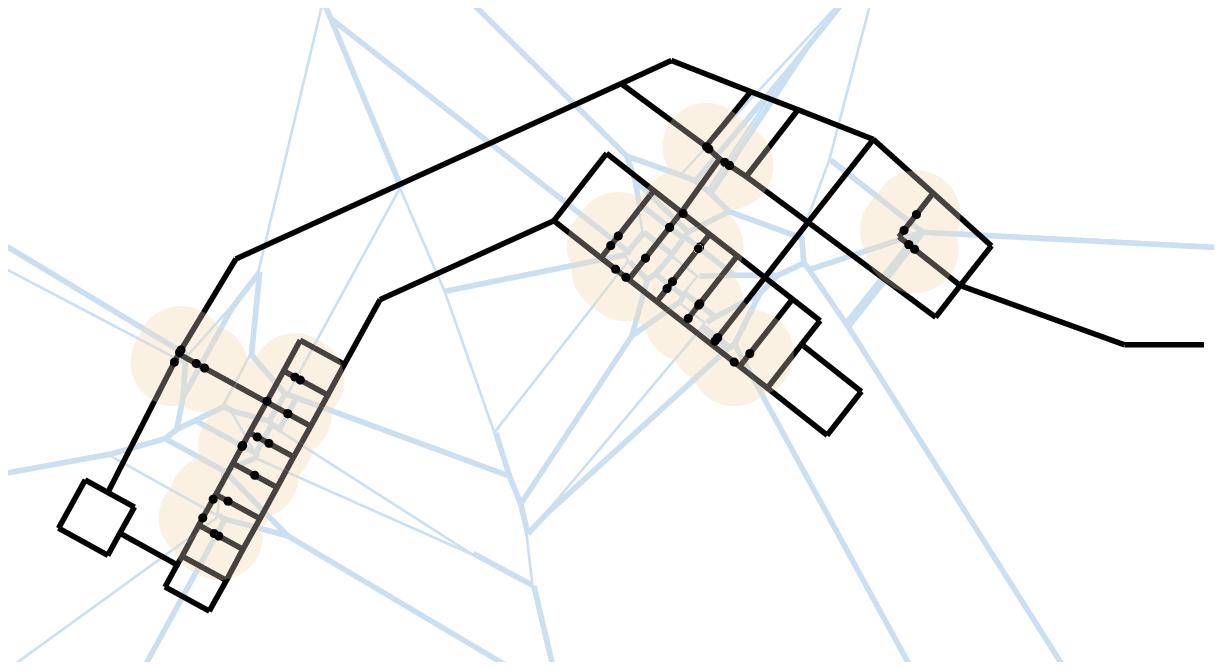}\\
a)
\end{tabular}
\\%
\begin{tabular}
[c]{c}%
\includegraphics[width=0.8\columnwidth]{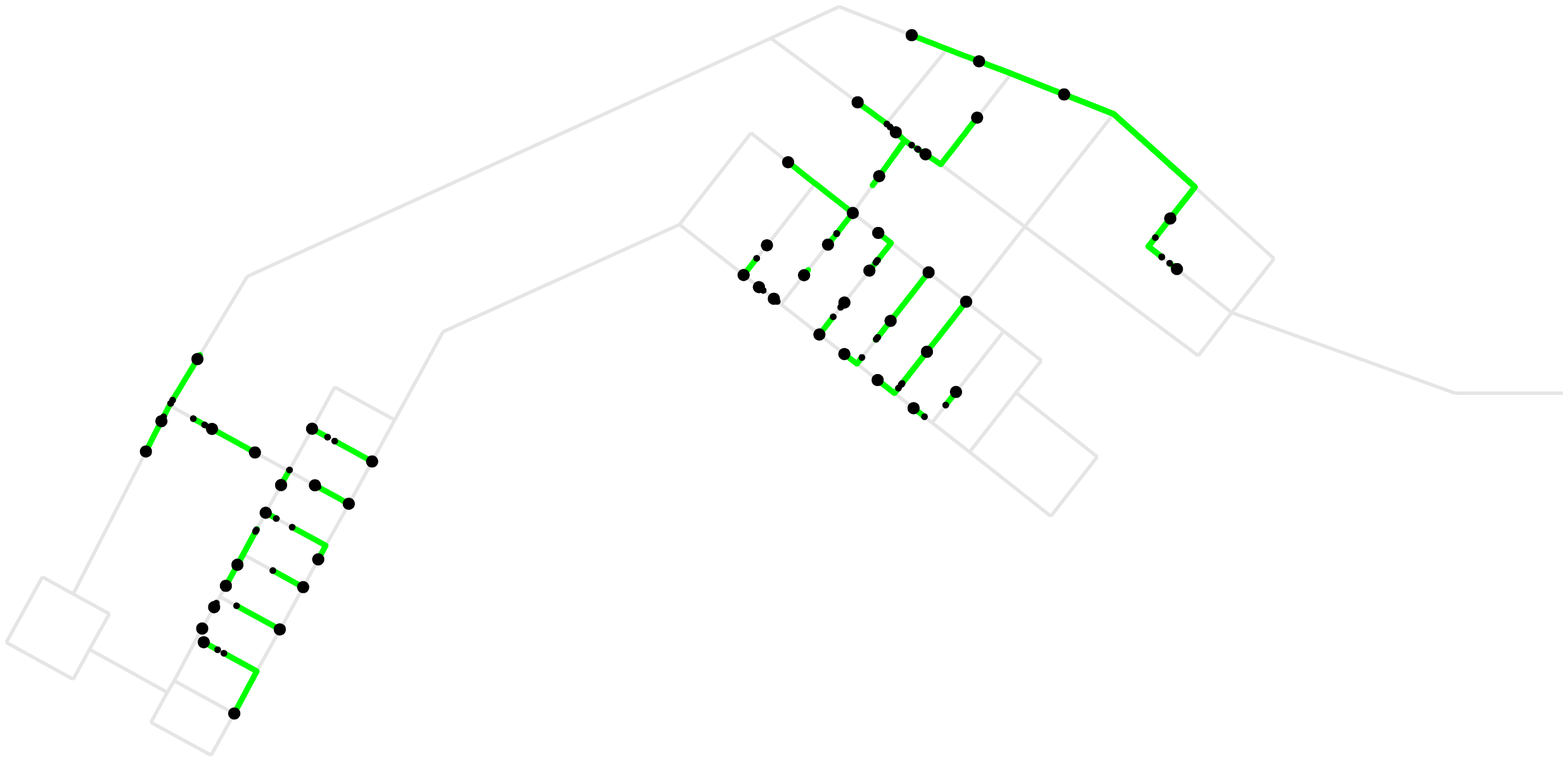}\\
b)
\end{tabular}
\\%
\begin{tabular}
[c]{c}%
\includegraphics[width=0.8\columnwidth]{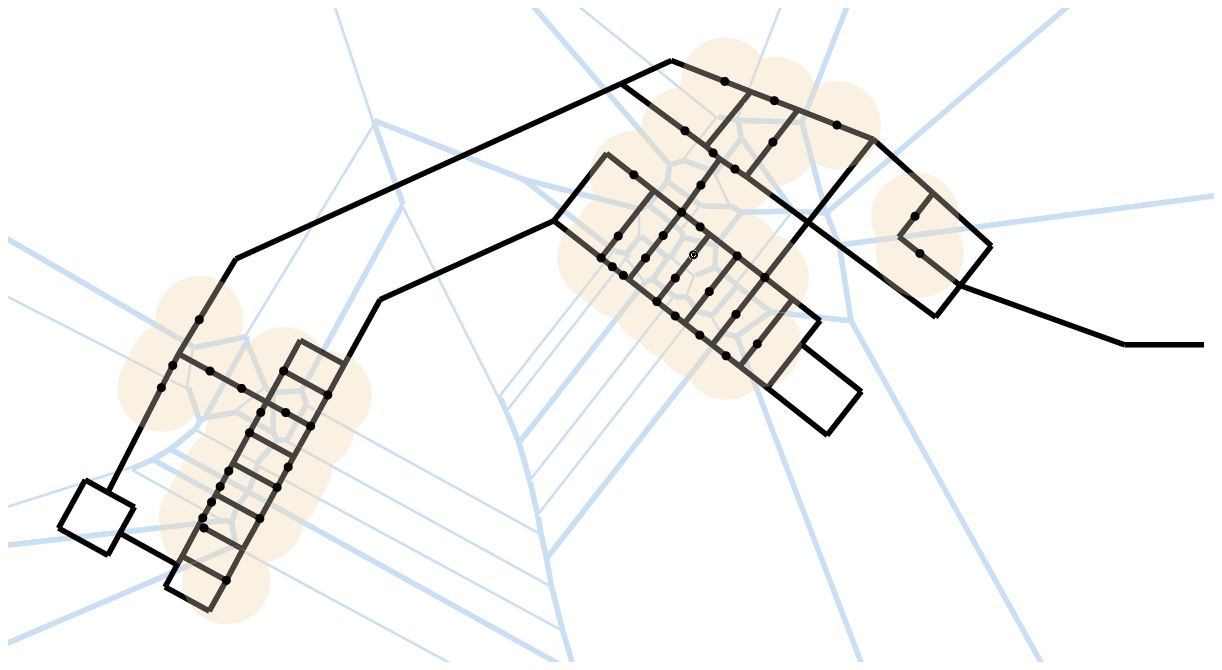}\\
c)
\end{tabular}
\end{tabular}
\end{center}
\caption{Second step of the optimization procedure for the deployment of $50$
sensors with $R=1$ over a full network: a) initial positions obtained by
spreading randomly $5$ sensors close to each cluster position found in the
first step (see fig.~\ref{fig:SchipholCP}-c)) and then projecting them on the
network, related Voronoi partition; b) gradient ascent flows (sensors
constrained to move on the network); c) final positions and related Voronoi
partition. }%
\label{fig:SchipholNN}%
\end{figure}

\section{Conclusions and Future Works\label{sec:conclusions}}

This paper focused on the problem of optimally deploying sensors in an
environment modeled as a network. An optimization problem for the allocation
of omnidirectional sensors with potentially limited sensing radius has been
formulated. A novel two-step optimization procedure based on a discrete-time
gradient ascent algorithm has been presented. In order the algorithm to not
get stuck early in one of the many local minima, in the first (off-line) step,
sensors are allowed to move in the plane. Moreover, a reduced model of the
environment, called collapsed network, and sensors' clustering, are used to
speed up the first optimization. The positions found in the first step are
then projected on the network and used in the second (on-line) finer
optimization, where sensors are constrained to move only on the network.

The proposed procedure can be used to solve both static and dynamic deployment
problems and the first step alone can provide solutions to location-allocation
problems involving facilities located in the interior of the network.

A main future research direction will consider the integration of classical
Operative Research methods with the present gradient algorithm. In particular
the first optimization could be addressed by adapting methods and heuristics
developed for the solution of the multisource Weber problem
(\cite{BrimbergOR00}). The aim is to build an overall global optimization
technique to solve location-allocation problems of large dimensions with many
facilities. Moreover, future research, more related to deployment problems,
will consider other sensor's models such as those with limited sensing cone.

\appendix{}

\section{\label{AppB}}

\begin{theorem}
\label{Th:derseg}Let $\varphi:\mathbb{R}^{2}\times\mathbb{R}_{+}%
\rightarrow\mathbb{R}$ be a smooth function w.r.t. its second argument and a
non-increasing, piecewise differentiable map with a finite number of bounded
discontinuities at $R_{1},\ldots,R_{N}\in\mathbb{R}_{+}$, $R_{1}<\ldots<R_{N}%
$, w.r.t. its first argument. Let $\nu:\mathbb{R}^{2}\times\mathbb{R}%
^{2}\rightarrow\overline{\mathbb{R}}_{+}$ be a continuously differentiable map
w.r.t. both its arguments. Let $s=[a,b]\subset\mathbb{R}^{2}$ be a segment and
assume that

\begin{description}
\item[i)] $\nu(\bar{x},a),\nu(\bar{x},b)\notin\left\{  R_{1},\ldots
,R_{N}\right\}  $;

\item[ii)] $\forall q\in\lbrack a,b]$, if $\nabla\nu(\bar{x},q)\cdot
(b-a)=0\Rightarrow\nu(\bar{x},q)\notin\left\{  R_{1},\ldots,R_{N}\right\}  $,
\end{description}

then%
\begin{align*}
&  \frac{d}{dx}\left.
%TCIMACRO{\dint \nolimits_{s}}%
%BeginExpansion
{\displaystyle\int\nolimits_{s}}
%EndExpansion
\varphi\left(  \nu\left(  x,q\right)  ,q\right)  dq\right\vert _{x=\bar{x}}\\
&  =%
%TCIMACRO{\dint \nolimits_{[0,1]\backslash\left\{  t_{1,1},\ldots,t_{1,k_{1}%
%},\ldots,t_{N,1},\ldots,t_{N,k_{N}}\right\}  }}%
%BeginExpansion
{\displaystyle\int\nolimits_{[0,1]\backslash\left\{  t_{1,1},\ldots
,t_{1,k_{1}},\ldots,t_{N,1},\ldots,t_{N,k_{N}}\right\}  }}
%EndExpansion
\frac{\partial}{\partial\nu}\varphi(\nu(\bar{x},\gamma(t)),\gamma(t))\left.
\frac{\partial\nu(x,\gamma(t))}{\partial x}\right\vert _{x=\bar{x}}\left\Vert
b-a\right\Vert dt\\
&  +%
%TCIMACRO{\dsum \limits_{i=1}^{N}}%
%BeginExpansion
{\displaystyle\sum\limits_{i=1}^{N}}
%EndExpansion%
%TCIMACRO{\dsum \limits_{j=1}^{k_{i}}}%
%BeginExpansion
{\displaystyle\sum\limits_{j=1}^{k_{i}}}
%EndExpansion
\left(  \varphi(R_{i}^{+},\gamma(t_{i,j}))-\varphi(R_{i}^{-},\gamma
(t_{i,j}))\right)  \left.  \frac{\partial\nu(x,\gamma(t))}{\partial
x}\right\vert _{\substack{x=\bar{x}\\t=t_{i,j}}}\left\Vert b-a\right\Vert
\text{,}%
\end{align*}
where $\gamma(t)=a+\left(  b-a\right)  t,$ $t\in\lbrack0,1]$ is a
parameterization for $s$ and $t_{i,j}\in\lbrack0,1],~j\in\{1,\ldots,k_{i}\}$
are the zeros of $\nu(\bar{x},\gamma(t))-R_{i}=0$.
\end{theorem}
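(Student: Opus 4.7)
The plan is to reduce the integral to a one-dimensional integral in the arclength parameter $t$, identify the finitely many values $t_{i,j}$ where the integrand has a jump, and then apply the classical Leibniz rule on each smooth piece, summing up the interior and boundary contributions.

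First I would use the parameterization $\gamma(t) = a + (b-a)t$, $t \in [0,1]$, to rewrite
\[
\int_{s} \varphi(\nu(x,q), q)\, dq = \|b-a\| \int_{0}^{1} \varphi\bigl(\nu(x,\gamma(t)), \gamma(t)\bigr)\, dt,
\]
so that the problem becomes differentiating a scalar integral on $[0,1]$ with respect to $x$. The integrand is smooth in $(t,x)$ everywhere except on the level sets $\nu(x,\gamma(t)) = R_i$, where it has jumps of size $\varphi(R_i^+, \gamma(t)) - \varphi(R_i^-, \gamma(t))$.

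Next I would apply the implicit function theorem. Assumption (i) rules out the threshold values $R_i$ at the endpoints, and assumption (ii) guarantees that whenever $\nu(\bar{x}, \gamma(t)) = R_i$ one has $\partial_t \nu(\bar{x},\gamma(t)) = \nabla_{q}\nu(\bar{x},\gamma(t))\cdot(b-a) \neq 0$. Consequently each preimage $t_{i,j}$ of $R_i$ is isolated, there are only finitely many of them, and in a neighborhood of $\bar{x}$ they depend smoothly on $x$ as branches $t_{i,j}(x)$, with
\[
t'_{i,j}(\bar{x}) = -\frac{\partial_{x}\nu(\bar{x},\gamma(t_{i,j}))}{\partial_{t}\nu(\bar{x},\gamma(t_{i,j}))}.
\]
These branches partition $[0,1]$ into finitely many subintervals $I_{k}(x)$ on each of which the integrand is the restriction of a smooth function $F_{k}(t,x)$ obtained by continuously extending $\varphi$ across a single side of the relevant threshold.

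Then I would write
\[
\int_{0}^{1} \varphi(\nu(x,\gamma(t)),\gamma(t))\, dt = \sum_{k} \int_{I_{k}(x)} F_{k}(t,x)\, dt
\]
and differentiate each term by the classical Leibniz rule. The interior contributions reassemble into
\[
\int_{[0,1] \setminus \{t_{i,j}\}} \tfrac{\partial}{\partial \nu}\varphi(\nu(\bar{x},\gamma(t)),\gamma(t))\, \partial_{x}\nu(\bar{x},\gamma(t))\, dt,
\]
which multiplied by $\|b-a\|$ gives the first term in the stated formula. The boundary contributions at each $t_{i,j}$ yield a telescoping expression $(F_{k+1} - F_{k})(t_{i,j},\bar{x}) \cdot t'_{i,j}(\bar{x}) = \pm (\varphi(R_i^+, \gamma(t_{i,j})) - \varphi(R_i^-, \gamma(t_{i,j})))\, t'_{i,j}(\bar{x})$ with the sign determined by whether $\nu$ is increasing or decreasing in $t$ at $t_{i,j}$; combined with the sign of $\partial_t \nu$ in the denominator of $t'_{i,j}(\bar{x})$, the two sign ambiguities cancel, leaving the stated jump term.

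The main obstacle is bookkeeping the signs at each crossing and verifying that the two sources of sign (the direction of the jump of $\varphi$ as seen along increasing $t$, versus the sign of $\partial_t \nu$) always combine consistently so that every $t_{i,j}$ contributes with the same structural sign $\varphi(R_i^+) - \varphi(R_i^-)$, independently of the orientation of the crossing. A secondary technical point is showing that the collection of branches $\{t_{i,j}(x)\}$ remains finite and non-colliding as $x$ varies in a small neighborhood of $\bar{x}$, so that the partition of $[0,1]$ into smooth pieces persists and the termwise Leibniz computation is legitimate; this follows from the non-degeneracy encoded in assumption (ii) together with compactness of $[0,1]$.
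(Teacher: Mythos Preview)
Your proposal is sound but follows a genuinely different route from the paper. You partition $[0,1]$ at the crossing points $t_{i,j}(x)$, invoke the implicit function theorem (via assumptions (i) and (ii)) to make those crossings smooth isolated branches, apply the classical Leibniz rule on each smooth subinterval, and then collect interior and boundary contributions. The paper instead uses the Dirac delta formalism: it writes the $x$-derivative of $\varphi(\nu(x,q),q)$ as its pointwise derivative off the jump set plus a singular part $\sum_{i}\bigl(\varphi(R_{i}^{+},q)-\varphi(R_{i}^{-},q)\bigr)\,\delta(\nu-R_{i})\,\partial_{x}\nu$, integrates this over $s$, and then pulls back to $[0,1]$ through the parameterization $\gamma$, evaluating the delta term by the sifting property.

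What each approach buys: your argument is entirely elementary, does not appeal to distributions, and forces the sign bookkeeping at each crossing to be made explicit through the derivative $t_{i,j}'(\bar{x})=-\partial_{x}\nu/\partial_{t}\nu$; this is exactly the point you flag as the main obstacle, and it is the right place to be careful. The paper's argument is shorter and packages all jump contributions into a single delta identity, at the cost of being more formal (in particular it applies the sifting property to $\delta(\nu-R_{i})$ without spelling out the composition rule $\delta(g(t))=\sum_{j}\delta(t-t_{j})/|g'(t_{j})|$). Both proofs use assumptions (i) and (ii) for the same purpose: to guarantee that the zero set of $\nu(\bar{x},\gamma(\cdot))-R_{i}$ is finite, avoids the endpoints, and deforms smoothly with $x$.
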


\begin{proof}
By using the Dirac's delta formalism we have%
\begin{align*}
&  \frac{d}{dx}\left.
%TCIMACRO{\dint \nolimits_{s}}%
%BeginExpansion
{\displaystyle\int\nolimits_{s}}
%EndExpansion
\varphi\left(  \nu\left(  x,q\right)  ,q\right)  dq\right\vert _{x=\bar{x}}\\
&  =\left.
%TCIMACRO{\dint \nolimits_{s}}%
%BeginExpansion
{\displaystyle\int\nolimits_{s}}
%EndExpansion
\frac{\partial}{\partial x}\varphi(\nu(x,q),q)dq\right\vert _{x=\bar{x}}\\
&  =%
%TCIMACRO{\dint \nolimits_{s}}%
%BeginExpansion
{\displaystyle\int\nolimits_{s}}
%EndExpansion
\left.  \frac{\partial}{\partial\nu}\varphi(\nu(x,q),q)\right\vert _{\nu
\notin\left\{  R_{1},\ldots,R_{N}\right\}  }\left.  \frac{\partial\nu
(x,q)}{\partial x}\right\vert _{x=\bar{x}}dq\\
&  +%
%TCIMACRO{\dint \nolimits_{s}}%
%BeginExpansion
{\displaystyle\int\nolimits_{s}}
%EndExpansion%
%TCIMACRO{\dsum \limits_{i=1}^{N}}%
%BeginExpansion
{\displaystyle\sum\limits_{i=1}^{N}}
%EndExpansion
\left(  \varphi(R_{i}^{+},q)-\varphi(R_{i}^{-},q)\right)  \delta\left(
\nu-R_{i}\right)  \left.  \frac{\partial\nu(x,q)}{\partial x}\right\vert
_{\nu=R_{i}}dq\text{.}%
\end{align*}
With the chosen parameterization of $s$, the equation $\nu(\bar{x}%
,\gamma(t))-R_{i}=0$ may have $k_{i}$ zeros $t_{i,j}\in\lbrack0,1],~j\in
\{1,\ldots,k_{i}\}$. Thanks to i) and ii) the set of zeros $t_{i,j}$ does not
change cardinality for $x$\ near $\bar{x}$, thus, $t_{i,j}$\ depends smoothly
on $x$. Recalling that for every arc $\gamma:[l,u]\rightarrow\Gamma$ we have%
\[%
%TCIMACRO{\dint \nolimits_{\Gamma}}%
%BeginExpansion
{\displaystyle\int\nolimits_{\Gamma}}
%EndExpansion
f(x)dx=%
%TCIMACRO{\dint \nolimits_{l}^{u}}%
%BeginExpansion
{\displaystyle\int\nolimits_{l}^{u}}
%EndExpansion
f(\gamma(t))\left\Vert \dot{\gamma}(t)\right\Vert dt\text{,}%
\]
and that for the special case of $s$ $\left\Vert \dot{\gamma}(t)\right\Vert
=\left\Vert b-a\right\Vert $, the derivative becomes%
\begin{align*}
&  \frac{d}{dx}\left.
%TCIMACRO{\dint \nolimits_{s}}%
%BeginExpansion
{\displaystyle\int\nolimits_{s}}
%EndExpansion
\varphi\left(  \nu\left(  x,q\right)  ,q\right)  dq\right\vert _{x=\bar{x}}\\
&  =%
%TCIMACRO{\dint \nolimits_{[0,1]\backslash\left\{  t_{1,1},\ldots,t_{1,k_{1}%
%},\ldots,t_{N,1},\ldots,t_{N,k_{N}}\right\}  }}%
%BeginExpansion
{\displaystyle\int\nolimits_{[0,1]\backslash\left\{  t_{1,1},\ldots
,t_{1,k_{1}},\ldots,t_{N,1},\ldots,t_{N,k_{N}}\right\}  }}
%EndExpansion
\frac{\partial}{\partial\nu}\varphi(\nu(\bar{x},\gamma(t)),\gamma(t))\left.
\frac{\partial\nu(x,\gamma(t))}{\partial x}\right\vert _{x=\bar{x}}\left\Vert
b-a\right\Vert dt\\
&  +%
%TCIMACRO{\dint \nolimits_{[0,1]}}%
%BeginExpansion
{\displaystyle\int\nolimits_{[0,1]}}
%EndExpansion%
%TCIMACRO{\dsum \limits_{i=1}^{N}}%
%BeginExpansion
{\displaystyle\sum\limits_{i=1}^{N}}
%EndExpansion
\left(  \varphi(R_{i}^{+},\gamma(t))-\varphi(R_{i}^{-},\gamma(t))\right)
\delta\left(  \nu-R_{i}\right)  \left.  \frac{\partial\nu(x,\gamma
(t))}{\partial x}\right\vert _{\nu=R_{i}}\left\Vert b-a\right\Vert dt\text{,}%
\end{align*}
from which, using the property $%
%TCIMACRO{\dint \nolimits_{l}^{u}}%
%BeginExpansion
{\displaystyle\int\nolimits_{l}^{u}}
%EndExpansion
f(x)\delta\left(  x-\bar{x}\right)  dx=f(\bar{x})$ for $\bar{x}\in\lbrack
l,u]$, we have the thesis.
\end{proof}

\bibliographystyle{plain}
\bibliography{acompat,Deployment}

\end{document}